 \numberwithin{equation}{section}
 \theoremstyle{plain}
 \newtheorem{theorem}[subsection]{Theorem}
 \newtheorem{lemma}[subsection]{Lemma}
 \newtheorem{defi}[subsection]{Definition}
 \newtheorem{prop}[subsection]{Proposition}
 \theoremstyle{definition}
 \newtheorem{remark}[subsection]{Remark}
 \newcommand{\tarc}{\mbox{\large$\frown$}}
 \newcommand{\arc}[1]{\stackrel{\tarc}{#1}}
 \newcommand{\jj}{\vee}
 \newcommand{\mm}{\wedge}
 \newcommand{\JJ}{\bigvee}
 \newcommand{\MM}{\bigwedge}
 \newcommand{\JJm}[2]{\JJ(\,#1\mid#2\,)}
 \newcommand{\MMm}[2]{\MM(\,#1\mid#2\,)}
 \newcommand{\uu}{\cup}
 \newcommand{\ii}{\cap}
 \newcommand{\UU}{\bigcup}
 \newcommand{\II}{\bigcap}
 \newcommand{\UUm}[2]{\UU\{\,#1\mid#2\,\}}
 \newcommand{\IIm}[2]{\II\{\,#1\mid#2\,\}}
 \newcommand{\ci}{\subseteq}
 \newcommand{\nc}{\nsubseteq}
 \newcommand{\sci}{\subset}
 \newcommand{\nci}{\nc}
 \newcommand{\ce}{\supseteq}
 \newcommand{\sce}{\supset}
 \newcommand{\nce}{\nsupseteq}
 \newcommand{\nin}{\notin}
 \newcommand{\es}{\emptyset}
 \newcommand{\set}[1]{\{#1\}}
 \newcommand{\setm}[2]{\{\,#1\mid#2\,\}}
 \newcommand{\nle}{\nleq}
 \newcommand{\ga}{\alpha}
 \newcommand{\gb}{\beta}
 \newcommand{\gc}{\chi}
 \newcommand{\gd}{\delta}
 \renewcommand{\gg}{\gamma}
 \newcommand{\gh}{\eta}
 \newcommand{\gi}{\iota}
 \newcommand{\gk}{\kappa}
 \newcommand{\gl}{\lambda}
 \newcommand{\gm}{\mu}
 \newcommand{\gn}{\nu}
 \newcommand{\go}{\omega}
 \newcommand{\gp}{\pi}
 \newcommand{\gq}{\theta}
 \newcommand{\gs}{\sigma}
 \newcommand{\gt}{\tau}
 \newcommand{\gx}{\xi}
 \newcommand{\gy}{\psi}
 \newcommand{\gz}{\zeta}
 \newcommand{\vp}{\varphi}
 \newcommand{\gG}{\Gamma}
 \newcommand{\gD}{\Delta}
 \newcommand{\gF}{\Phi}
 \newcommand{\gL}{\Lambda}
 \newcommand{\gO}{\Omega}
 \newcommand{\gP}{\Pi}
 \newcommand{\gQ}{\Theta}
 \newcommand{\gS}{\Sigma}
 \newcommand{\gX}{\Xi}
 \newcommand{\gY}{\Psi}
 \newcommand{\tbf}{\textbf}
 \newcommand{\tit}{\textit}
 \newcommand{\mbf}{\mathbf}
 \newcommand{\B}{\boldsymbol}
 \newcommand{\C}[1]{\mathcal{#1}}
 \newcommand{\D}[1]{\mathbb{#1}}
 \newcommand{\F}[1]{\mathfrak{#1}}
 \newcommand{\te}{\text}
 \newcommand{\tei}{\textit}
 \newcommand{\im}{\implies}
 \newcommand{\ti}{\times}
 \newcommand{\exi}{\ \exists \ }
 \newcommand{\ther}{\therefore}
 \newcommand{\be}{\because}
 \newcommand{\ep}{\epsilon}
 \newcommand{\la}{\langle}
 \newcommand{\ra}{\rangle}
 \newcommand{\ol}{\overline}
 \newcommand{\ul}{\underline}
 \newcommand{\q}{\quad}
 \newcommand{\qq}{\qquad}
 \newcommand{\fa}{\ \forall \, }
 \newcommand{\nd}{\noindent}
 \newcommand{\bs}{\backslash}
 \newcommand{\pa}{\partial}
 \newcommand{\para}{\parallel}
 \newcommand{\sm}{\setminus}
 \newcommand{\tl}{\tilde}
 \title{Quantization dimensions for the bi-Lipschitz recurrent Iterated function systems}
 \author[1]{Amit Priyadarshi}
 \author[2]{Mrinal K. Roychowdhury}
 \author[3*]{Manuj Verma}
 \affil[1,3*]{Department of Mathematics, Indian Institute of Technology Delhi, New Delhi, India 110016}
 \affil[2] {School of Mathematical and Statistical Sciences, University of Texas Rio Grande Valley, 1201
 	West University Drive, Edinburg, TX 78539-2999, USA}
 \affil[1]{priyadarshi@maths.iitd.ac.in}
 \affil[2]{mrinal.roychowdhury@utrgv.edu}
 \affil[3*]{correspondence to - mathmanuj@gmail.com}
\begin{document}

	\date{}	
Published, Dynamical Systems 	
	

	
	\maketitle
	
	

	
	
%
%
%
%
	
	


	

	\begin{abstract}
		In this paper,  the quantization dimensions of the Borel probability measures supported on the limit sets of the bi-Lipschitz recurrent  iterated function systems under the strong  open set condition in terms of the spectral radius have been estimated.
	\end{abstract}
	
		{\bf Keywords:} {Quantization dimension, Hausdorff dimension, Box dimension, Recurrent iterated function systems, Probability measures} \par
	{\bf Mathematics Subject Classification 2020:} {28A80, 60E05, 94A34}

	\section{Introduction}
	The term `quantization' was first used in signal processing and data compression theory, and quantization is a process of approximating a probability measure by a discrete probability measure supported on a finite set.
	In quantization theory, there are two main types of problems. The first is to determine the quantization dimension (see \cite{GL1}) of a probability measure, and second is to find out the optimal sets (see \cite{GL1}) of $n$-means for a probability measure. There is  some literature available on determining the $n$-optimal sets; see \cite{R7,R6}, for more details. In this paper, we focus on the estimation of the quantization dimension. A few decades ago, Zador (see \cite{ZADOR}) introduced the term `quantization dimension' and gave a formula for it. Quantization dimension is also connected with other dimensions of dynamical systems; for example, the lower quantization dimension of a probability measure lies between the Hausdorff dimension and the lower box counting dimension of the measure, and the upper quantization dimension lies between the packing dimension and the upper box counting dimension of the measure (see  \cite{DAi,Potz}). Thus, the quantization dimension is introduced as a new type of the fractal dimension spectrum. There are various techniques to determine the fractal dimension of sets and the graphs of continuous functions such as operator theoretic method \cite{Ban,Nussbaum1,Pr1}, function spaces technique \cite{SS5,Verma21}, Fourier transform method \cite{Fal}, mass distribution technique \cite{MF2,Fal,JH}, oscillation technique \cite{Fal,w&l}, potential theoretic method \cite{Fal}, etc. The fractal dimension for the fractional integrals of some continuous functions can be found in \cite{SS51,Liang}.  The basic results on the quantization dimension and its relationship to other fractal dimensions are also provided in the Graf-Luschgy's book (see \cite{GL1}).
	\par
	Given a Borel probability measure $\mu $ on $\mathbb{R}^k$, a number $r \in (0, \infty)$ and $ n \in \mathbb{N}$, the \tit{$n$th quantization error} of order $r$ for $\mu $ is defined by $$V_{n,r}(\mu):=\inf \Big\{\int \rho(x, A)^r d\mu(x): A \subset \mathbb{R}^k, \, \text{card}(A) \le n\Big\},$$
	where $\rho(x, A)$ represents the Euclidean distance of the point $x$ from the set $A$. If $\int\|x\|^rd\mu(x)<\infty$ and the support of $\mu$ contains infinitely many elements, then there exists a set $A$ with $\text{card}(A)=n$ such that $V_{n,r}(\mu)=\int \rho(x, A)^r d\mu(x)$ (see \cite{GL1}).  Any set $A$ for which the above infimum occurs is called an \tit{$n$-optimal set} of order $r.$ Let $e_{n,r}(\mu)=V_{n,r}^{\frac{1}{r}}(\mu)$. We define the \tit{quantization dimension} of order $r$ of $\mu $ by $$D_r=D_r(\mu):= \lim_{n \to \infty} \frac{ \log n}{- \log \big(e_{n,r}(\mu)\big)},$$
	provided the limit exists.
	If the limit does not exist, then we define the lower and the upper quantization dimensions by taking the limit inferior and the limit superior of the above sequence and are denoted by $\underline{D}_r$ and $\overline{D}_r$, respectively.
	\par Recall that, the system $\mathcal{J}=\{\mathbb{R}^k; f_1,f_2,\ldots, f_N\}$ is called an \tit{iterated function system (IFS)}, if each $f_i$ is a contraction mapping on $\mathbb{R}^k$ such that $\|f_i(x)-f_i(y)\|\leq s_i \|x-y\|~\forall~x,y \in \mathbb{R}^{k},$ where $0<s_i<1.$ By the Banach contraction theorem, there exists a unique nonempty compact set $E\subset\mathbb{R}^k$ such that $E=\cup_{i=1}^{N}f_i(E)$. Furthermore, if $(p_1,p_2,\ldots,p_N)$ is a probability vector corresponding to the IFS $\mathcal{J}$, then there exists a unique Borel probability measure $\mu$ supported on the set $E$ such that $\mu=\sum_{i=1}^{N}p_i \mu\circ f_{i}^{-1}.$ We call the set $E$ as the \tit{attractor} and $\mu$ as the \tit{invariant measure} of the IFS $\mathcal{J}.$
	We say that the IFS $\mathcal{J}$ satisfies the \tit{strong separation condition (SSC)} if $f_i(E)\cap f_j(E)=\emptyset$ for all $i\ne j$, and the \tit{open set condition (OSC)} if there exists a nonempty bounded open set $U\subset \mathbb{R}^k$ such that $f_i(U)\subseteq U$, $f_i(U)\cap f_j(U)=\emptyset$ for all $1\leq i\ne j\leq N.$ We say that the IFS $\mathcal{J}$ satisfies the \tit{strong open set condition (SOSC)} if the IFS $\mathcal{J}$ satisfies the open set condition with $U\cap E\ne \emptyset$ (see \cite{Fal,AS}). In \cite{Hoch}, Hochman introduced a weaker separation condition namely `exponential separation condition' and proved the dimension drop conjecture under this separation condition.
	\par
	Let $\{\mathbb{R}^k;f_1,f_2,\ldots,f_N\}$ be a self-similar IFS with probability vector $(p_1,p_2,\ldots,p_N)$ such that $\|f_i(x)-f_i(y)\|=s_i\|x-y\|$, where $0<s_i<1$ for each $i\in \{1,2,\ldots,N\}$. Let $E$ be the attractor and $\mu$ be the invariant measure corresponding to this self-similar IFS. Under the OSC, Graf and Luschgy (see \cite{GL2}) proved that the quantization dimension $D_r$ of this invariant measure $\mu$ is uniquely determined by the following formula
	\begin{align}\label{for}
		\sum_{i=1}^{N}(p_i s_{i}^{r})^{\frac{D_r}{r+D_r}}=1.
	\end{align}
	After that, Lindsay and Mauldin (see \cite{Lindsay}) generalized the Graf-Luschgy's result (see \cite{GL2}) to $F$-conformal measure associated with conformal IFS consisting of finitely many conformal mappings. From \cite{Pat}, we understand that the multifractal spectrum is the Legendre transform of the  temperature function (see \cite{Lindsay}). Also, the quantization dimension function is connected with the temperature function as can be seen from the Lindsay-Mauldin's paper \cite{Lindsay}. For more details on the multifractal analysis of measures and temperature function see \cite{Fal,ZLI,Pat,Bilal,Bilal2}. 
\par
	In \cite{R9,MK1}, Roychowdhury tried to extend  the Graf-Luschgy's result \cite{GL2} to the self-similar recurrent IFS and  bi-Lipschitz recurrent IFS, respectively. In these papers the author has used an incorrect invariance relation for the measure supported on the limit set of the recurrent IFS (see Remark \ref{re3.4}). Thus, the proofs in these papers are not correct. In this paper, we provide an invariance inequality (see Lemma \ref{le2.5}) for the measure supported on the limit set of the recurrent IFS  and using this, we determine the bounds of the quantization dimension of the Borel probability measure supported on the limit set of the bi-Lipschitz recurrent  IFS under the SOSC in terms of spectral radius. Note that the technique used in the current paper is also different from those in \cite{R9,MK1}. Moreover, this paper also generalizes the results of Graf and Luschgy \cite{GL1} in a more general setting.
	\par
	The paper is organized as follows. In the upcoming Section \ref{se2}, we provide some definitions and basic results. In Section \ref{se3}, we prove our main result.

	\section{Preliminaries}\label{se2}
	In this section, we give some basic results and definitions.
	\begin{defi}
		Let $A$ be a subset of $\mathbb{R}^k$. The Voronoi region of $a\in A$ is defined by
		$$W(a|A)=\{x\in X : \rho(x,a)=\rho(x,A)\}$$
		and the set $\{W(a|A) :  a\in A\}$ is called the Voronoi diagram of $A$.
	\end{defi}
	\begin{lemma}(see \cite{GL1})\label{le2.11}
		Let $\mu$ be a Borel probability measure on $\mathbb{R}^k$ with compact support $A_{*}$ and $r\in (0,\infty)$. Let $A_n$ be an $n$-optimal set for measure $\mu$ of order $r$. Define
		$$\|A_n\|_{\infty}=\max\limits_{a\in A_n}~~\max\limits_{x\in W(a|A_n)\cap A_{*}}\rho(x,a).$$
		Then $$\Big(\frac{\|A_n\|_{\infty}}{2}\Big)^r\min\limits_{x\in A_{*}}\mu\Big(B\Big(x,\frac{\|A_n\|_{\infty}}{2}\Big)\Big)\leq V_{n,r}(\mu).$$
	\end{lemma}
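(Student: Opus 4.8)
The plan is to use optimality of $A_n$ only in the weak form $V_{n,r}(\mu)=\int\rho(x,A_n)^r\,d\mu(x)$, and then to bound this integral from below by restricting it to a small ball centred at the ``worst'' point of the support: the point of $A_*$ whose distance to $A_n$, measured inside its own Voronoi cell, equals $\|A_n\|_\infty$. Everything beyond that is the triangle inequality.

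First I would check that the maximum defining $\|A_n\|_\infty$ is genuinely attained. Since $A_n$ is finite and each Voronoi region $W(a\mid A_n)$ is closed, the set $W(a\mid A_n)\cap A_*$ is compact, and $\rho(\cdot,a)$ is continuous, so the supremum over it is attained; the Voronoi regions cover $\mathbb{R}^k\supseteq A_*\ne\es$, so at least one such intersection is nonempty. Hence there exist $a_0\in A_n$ and $x_0\in W(a_0\mid A_n)\cap A_*$ with $\rho(x_0,a_0)=\|A_n\|_\infty$. The key observation is that $x_0$ is far from \emph{every} point of $A_n$, not just from $a_0$: since $x_0\in W(a_0\mid A_n)$ one has $\rho(x_0,a)\ge\rho(x_0,A_n)=\rho(x_0,a_0)=\|A_n\|_\infty$ for all $a\in A_n$.

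Next, set $t=\|A_n\|_\infty/2$. For any $y\in B(x_0,t)$ and any $a\in A_n$, the triangle inequality gives $\rho(y,a)\ge\rho(x_0,a)-\rho(x_0,y)\ge\|A_n\|_\infty-t=t$, so $\rho(y,A_n)\ge t$ throughout $B(x_0,t)$. Integrating and using that $A_n$ is $n$-optimal,
\[
V_{n,r}(\mu)=\int\rho(x,A_n)^r\,d\mu(x)\ \ge\ \int_{B(x_0,t)}\rho(x,A_n)^r\,d\mu(x)\ \ge\ t^r\,\mu\big(B(x_0,t)\big).
\]
Since $x_0\in A_*$, we have $\mu\big(B(x_0,t)\big)\ge\min_{x\in A_*}\mu\big(B(x,t)\big)$, and substituting $t=\|A_n\|_\infty/2$ yields exactly the asserted inequality.

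I do not expect a real obstacle: the only non-formal point is the attainment of the maximum in the definition of $\|A_n\|_\infty$, which follows from compactness of $A_*$ and finiteness of $A_n$; the rest is two applications of the triangle inequality together with monotonicity of the integral. (Optimality of $A_n$ is used only to replace the infimum $V_{n,r}(\mu)$ by the actual integral $\int\rho(x,A_n)^r\,d\mu$.)
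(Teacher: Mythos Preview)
Your argument is correct. The paper itself does not give a proof of this lemma; it simply cites \cite{GL1} (Graf--Luschgy), where this is essentially Lemma~12.4. Your proof is the standard one found there: pick a point $x_0\in A_*$ realising $\|A_n\|_\infty$, observe that $\rho(x_0,A_n)=\|A_n\|_\infty$ because $x_0$ lies in a Voronoi cell, shrink to the ball $B(x_0,\|A_n\|_\infty/2)$ via the triangle inequality, and bound the integral from below on that ball. There is nothing to add or correct.
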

	Note that,
	if $\mu$ is a Borel probability measure on $\mathbb{R}^k$ with compact support, then for every $n\in \mathbb{N}$, there exists a finite set $A_n\subset \mathbb{R}^k$ such that
	$$V_{n,r}(\mu)=\int \rho(x,A_n)^rd\mu(x).$$
	This $A_n$ is an $n$-optimal set for measure $\mu$ of order $r$.

	\begin{lemma}(see \cite{GL1})\label{le2.9}
		Let $\mu$ be a Borel probability measure on $\mathbb{R}^k$ with compact support. Then, for any $r\in (0,\infty)$
		$$V_{n,r}(\mu)\to 0,~~\text{as}~~n\to \infty.$$
	\end{lemma}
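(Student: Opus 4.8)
The plan is to exploit two elementary facts: that the sequence $\big(V_{n,r}(\mu)\big)_{n\in\mathbb{N}}$ is non-increasing, and that a compact subset of $\mathbb{R}^k$ is totally bounded. First I would observe that $V_{n+1,r}(\mu)\le V_{n,r}(\mu)$ for every $n$, simply because any competitor set $A$ with $\text{card}(A)\le n$ also satisfies $\text{card}(A)\le n+1$, so the infimum defining $V_{n+1,r}(\mu)$ runs over a larger family of sets. Hence $\big(V_{n,r}(\mu)\big)_n$ is a non-increasing sequence of non-negative reals, and therefore converges to its infimum; it remains to show that this infimum equals $0$.

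Fix $\varepsilon>0$ and let $A_*$ denote the (compact) support of $\mu$. Choose $\delta>0$ with $\delta^r<\varepsilon$. Since $A_*$ is compact it is totally bounded, so there are finitely many points $x_1,\dots,x_m\in A_*$ with $A_*\subseteq \bigcup_{i=1}^{m} B(x_i,\delta)$. Put $A=\{x_1,\dots,x_m\}$, so $\text{card}(A)\le m$. Then $\rho(x,A)\le \delta$ for every $x\in A_*$, and since $\mu$ is a probability measure with $\mu(A_*)=1$ we get
$$V_{m,r}(\mu)\le \int \rho(x,A)^r\,d\mu(x)\le \int_{A_*}\delta^r\,d\mu(x)=\delta^r<\varepsilon.$$

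Combining this with the monotonicity from the first step yields $V_{n,r}(\mu)<\varepsilon$ for all $n\ge m$, and since $\varepsilon>0$ was arbitrary we conclude $V_{n,r}(\mu)\to 0$ as $n\to\infty$. There is essentially no serious obstacle here; the only points deserving a word of care are that compactness of the support is precisely what supplies the finite $\delta$-net (and also makes the moment $\int\|x\|^r\,d\mu(x)$ automatically finite, so no extra integrability hypothesis is needed), and that the normalization $\mu(\mathbb{R}^k)=1$ is what converts the pointwise bound $\rho(x,A)^r\le \delta^r$ on $A_*$ into the same bound for the integral.
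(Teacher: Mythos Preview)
Your argument is correct and entirely standard: monotonicity of $n\mapsto V_{n,r}(\mu)$ together with total boundedness of the compact support yields the conclusion immediately. Note that the paper does not supply its own proof of this lemma; it simply cites \cite{GL1}, so there is no in-paper argument to compare against, but your reasoning is precisely the elementary proof one would expect.
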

	
	\begin{prop}(see \cite{GL1})\label{prop2.12}
		\begin{enumerate}
			\item If $0\leq t_1<\overline{D}_r<t_2,$ then
			$$\limsup_{n\to \infty}n e_{n,r}^{t_1}=\infty~~~\text{and}~~~\lim_{n\to \infty}n e_{n,r}^{t_2}=0.$$
			\item If $0\leq t_1<\underline{D}_r<t_2,$ then
			$$\liminf_{n\to \infty}n e_{n,r}^{t_2}=0~~~\text{and}~~~\lim_{n\to \infty}ne_{n,r}^{t_1}=\infty.$$
		\end{enumerate}
		
	\end{prop}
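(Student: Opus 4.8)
The plan is to reduce the four assertions to elementary facts about the single real sequence $b_n:=-\log e_{n,r}$, where I abbreviate $e_{n,r}=e_{n,r}(\mu)$. Since the support of $\mu$ is infinite, $e_{n,r}>0$ for every $n$, and by Lemma \ref{le2.9} we have $V_{n,r}(\mu)\to 0$, hence $e_{n,r}=V_{n,r}(\mu)^{1/r}\to 0$ and therefore $b_n\to+\infty$. For any $t\ge 0$ one has the identity
$$\log\!\big(n\,e_{n,r}^{t}\big)=\log n-t\,b_n=b_n\Big(\frac{\log n}{b_n}-t\Big),$$
so, because $b_n\to\infty$, the left-hand side tends to $+\infty$ along any subsequence on which $\log n/b_n\ge t+\epsilon$, and tends to $-\infty$ (over all $n$) whenever $\log n/b_n<t-\epsilon$ for all large $n$. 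Recalling that $\overline D_r=\limsup_n \log n/b_n$ and $\underline D_r=\liminf_n \log n/b_n$, the proposition then follows by unwinding these two limits.

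For part (1), suppose $t_2>\overline D_r$ and set $\epsilon=(t_2-\overline D_r)/2>0$; by the definition of the upper limit, $\log n/b_n<\overline D_r+\epsilon=t_2-\epsilon$ for all large $n$, so $\log(n\,e_{n,r}^{t_2})<-\epsilon\,b_n\to-\infty$, i.e.\ $n\,e_{n,r}^{t_2}\to 0$. If instead $t_1<\overline D_r$, choose $\epsilon>0$ with $t_1+\epsilon<\overline D_r$; then $\log n/b_n>t_1+\epsilon$ holds for infinitely many $n$, say along a subsequence $(n_k)$, and there $\log(n_k\,e_{n_k,r}^{t_1})>\epsilon\,b_{n_k}\to\infty$, whence $\limsup_n n\,e_{n,r}^{t_1}=\infty$.

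Part (2) is the mirror image, with the roles of ``for all large $n$'' and ``along a subsequence'' swapped and $\underline D_r=\liminf_n \log n/b_n$ in place of $\overline D_r$. Concretely, $t_2>\underline D_r$ produces a subsequence along which $\log n/b_n<t_2-\epsilon$, giving $\liminf_n n\,e_{n,r}^{t_2}=0$; and $t_1<\underline D_r$ forces $\log n/b_n>t_1+\epsilon$ for all large $n$, giving $n\,e_{n,r}^{t_1}\to\infty$.

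I do not expect a genuine obstacle here: this is a soft statement about the sequence $(e_{n,r})_n$ alone and uses nothing about the geometry of the recurrent IFS. The only step requiring a moment's care is the preliminary one --- verifying that $b_n$ is eventually positive and diverges to $+\infty$, which is exactly where Lemma \ref{le2.9} together with the infiniteness of the support of $\mu$ enters; everything after that is a direct translation of the definitions of $\limsup$ and $\liminf$.
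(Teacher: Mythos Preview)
Your argument is correct. The paper does not supply its own proof of this proposition; it is quoted from \cite{GL1} without argument, so there is nothing to compare against beyond noting that your reduction to the behaviour of $b_n=-\log e_{n,r}$ and the identity $\log(n\,e_{n,r}^{t})=b_n\big(\tfrac{\log n}{b_n}-t\big)$ is exactly the standard route. One small remark: you invoke Lemma~\ref{le2.9} (compact support) to get $b_n\to+\infty$, which is fine in the setting of this paper; in full generality the hypothesis in \cite{GL1} is the moment condition $\int\|x\|^r\,d\mu<\infty$, which already forces $e_{n,r}\to 0$.
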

	Let $X\subset \mathbb{R}^k$ be  a nonempty compact set such that $X=cl{(int X)}.$ Then, $(X,d)$ is a compact metric space, where $d$ is the Euclidean metric on $X$. Let $N\geq2$ and $\{f_{ij}: 1\leq i,j\leq N\}$ be a system such that
	$$c_{ij}d(x,y)\leq d(f_{ij}(x),f_{ij}(y))\leq s_{ij}d(x,y) \te{ for all } x,y\in X,$$
	where $0<c_{ij}\leq s_{ij}<1$ for $1\leq i,j\leq N.$ Let $P=[p_{ij}]_{1\leq i,j\leq N}$ be an $N\times N$ irreducible row stochastic matrix, that is, for all $i$, $\sum_{j=1}^{N}p_{ij}=1$ and $p_{ij}\geq 0$ for all $1\leq i,j\leq N.$  Then,  the collection $\mathcal{I}=\{X; f_{ij}, p_{ij}: 1\leq i,j\leq N\}$ is called a bi-Lipschitz recurrent iterated function system (RIFS). By using the Banach's fixed point theorem, there exist unique nonempty compact subsets $E_1,E_2,\ldots,E_N$ of $X$ such that
	\begin{equation} \label{invariant}
		E_i=\bigcup_{j,p_{ji}>0}f_{ij}(E_j).
	\end{equation}
	The set $E=\bigcup_{i=1}^{N}E_i$ is called the limit set of the RIFS $\mathcal{I}.$
	\par For $n\geq 2$, let $\Omega_n=\{(w_1,w_2,\ldots,w_n): p_{w_{i+1}w_{i}}>0 \te{ for all } 1\leq i\leq n-1~~ \text{and}~~1\leq w_i\leq N\}$. Set $\Omega^*=\bigcup_{n\geq 2}\Omega_n$. For $w=(w_1,w_2,\ldots,w_n)\in \Omega^*$, we define $E_{w}=f_{w_1w_2}\circ f_{w_2w_3}\circ\dots\circ f_{w_{n-1}w_n}(E_{w_n})$. By using \eqref{invariant}, the limit set $E$ satisfies the following invariance equality
	\begin{equation}
		E=\bigcup_{w\in \Omega_n}E_w \te{ for all } n\geq2.
	\end{equation}
	Define the code space $\Omega$ associated with the RIFS $\mathcal{I}$ as follows
	$$\Omega=\{(w_1,w_2,\ldots): p_{w_{i+1}w_{i}}>0 \te{ for all } i\in \mathbb{N}~~ \text{and}~~1\leq w_i\leq N\}.$$
	Let $\Pi:\Omega\times \Omega \to \mathbb{R}$ be defined as follows
	$$\Pi(w,\tau)=\begin{cases}
		0 & \text{if}~~w=\tau,\\
		2^{-n} & \text{if}~~w\ne \tau,
	\end{cases}$$
	where $n=\min\{m: w_m \ne \tau_m\}.$
	Then,  $(\Omega,\Pi)$ is a compact metric space.
	\par
	For $w=(w_1,w_2,\ldots)\in \Omega$, we denote $w|_n=(w_1,w_2,\ldots, w_n)$ and $w|_0=\emptyset$. Similarly, for $w=(w_1,w_2,\ldots,w_m)\in \Omega^*$, if $n\leq m$, then we write $w|_n=(w_1,w_2,\ldots, w_n).$ For $w=(w_1,w_2,\ldots, w_n)$ and $\tau=(\tau_1,\tau_2,\ldots, \tau_m)$ in $\Omega^*$, if $p_{\tau_{1}w_n}>0$, we denote \[w\tau=(w_1,w_2,\ldots, w_n,\tau_1,\tau_2,\ldots, \tau_m)\in \Omega^*.\]
	We say that $w$ is an extension of $\tau\in \Omega^*$ if $w|_{|\tau|}=\tau$, where $|\tau|$ represents the length of $\tau.$ For any $w=(w_1,w_2,\ldots,w_n)\in \Omega^*$, we define $w^-=(w_1,w_2,\ldots,w_{n-1}).$
	\par
	We define the coding map  $\pi: \Omega\to E$  as follows
	$$\pi(w)=\bigcap_{n\geq2} f_{w_1w_2}\circ f_{w_2w_3}\circ\dots\circ f_{w_{n-1}w_n}(E_{w_n}),$$
	where $w=(w_1,w_2,\ldots)\in \Omega.$ By the definition of $\pi$, it is clear that $\pi$ is an onto and continuous mapping.
	\par If $w=(w_1,w_2,\ldots,w_n)\in \Omega^*$, then the set $\{\tau\in \Omega: \tau_i=w_i ~~\text{for}~~i\in \{1,2,\ldots,n\}\}$ is called the cylinder of length $n$ in $\Omega$  generated by $w\in \Omega^*$, and is denoted by $[w]=[w_1,w_2,\ldots,w_n].$ A cylinder of length $0$ is called the empty cylinder. The set of all elements in $\Omega$ starting with the symbol $i$ is denoted by $C(i).$
	\par Let $\mathcal{B}$ be the Borel sigma-algebra generated by the cylinders in $\Omega.$ Since $P$ is an irreducible row stochastic matrix, there exists a unique probability vector $p=(p_1,p_2,\ldots,p_N)$ such that $pP=p$, that is $\sum_{i=1}^{N}p_ip_{ij}=p_j $ for all $j\in \{1,2,\ldots,N\}.$ We define
	$$\nu([w_1,w_2,\ldots,w_n])=p_{w_2w_1}p_{w_3w_2}\ldots p_{w_{n}w_{n-1}}p_{w_n},$$
	where $[w_1,w_2,\ldots,w_n]$ is a cylinder in $\Omega.$ Using  Kolmogorov's extension theorem, $\nu$ can be extended to a unique Borel probability measure on $\mathcal{B},$ which we denote by $\nu$ itself. The support of the measure $\nu$ is $\Omega.$ For $w=(w_1,w_2,\ldots, w_n)\in \Omega^*$, $n\geq2$, we denote
	$$f_w=f_{w_1w_2}\circ f_{w_2w_3}\circ\dots\circ f_{w_{n-1}w_n},\quad c_w=c_{w_1w_2} c_{w_2w_3}\dots c_{w_{n-1}w_n},$$
	$$s_w=s_{w_1w_2} s_{w_2w_3}\dots s_{w_{n-1}w_n},\quad p_w=p_{w_2w_1} p_{w_3w_2}\dots p_{w_{n}w_{n-1}}p_{w_n}.$$
	Also, let $$s_{\max}=\max\{s_{ij}: p_{ji}>0, 1\leq i,j\leq N\},\quad s_{\min}=\min\{s_{ij}: p_{ji}>0, 1\leq i,j\leq N\},$$
	$$c_{\max}=\max\{c_{ij}: p_{ji}>0, 1\leq i,j\leq N\},\quad c_{\min}=\min\{c_{ij}: p_{ji}>0, 1\leq i,j\leq N\}$$
	$$p_{\max}=\max\{p_1,p_2,\ldots,p_N\},\quad p_{\min}=\min\{p_1,p_2,\ldots,p_N\},$$
	$$P_{\max}=\max\{p_{ij}: 1\leq i,j\leq N \}, \quad P_{\min}=\min\{p_{ij}>0: 1\leq i,j\leq N \}.$$
	Let $\mu$ be the push forward measure of $\nu$ under the coding map $\pi$ on the limit set $E$ of the RIFS $\mathcal{I},$ that is, $\mu=\nu\circ \pi^{-1}.$ Clearly, the support of $\mu$ is $E.$ Our aim is to determine the quantization dimension of this measure $\mu.$ Define
	$$\pi_i:=\pi|_{C(i)},\quad \mu_i=\mu|_{E_i}.$$
	Clearly $\mu_i=\nu\circ\pi_i^{-1}.$ For each $1\leq i\leq N$, the probability measure $\mu_i$ satisfies the  invariance
	$\mu_i=\sum_{j=1}^{N}p_{ji}\mu_j\circ f_{ij}^{-1}.$ 
	\begin{lemma}\label{le2.5}
		For each $m\geq 2$, $\mu$ satisfies the following inequality
		\begin{align}\label{eq4}
			\mu\leq \frac{1}{p_{\min}}\sum_{w\in \Omega_m}p_w\mu\circ f_w^{-1}.
		\end{align}
	\end{lemma}
	\begin{proof} First, for simplicity, we will prove the inequality for $m=2.$
		Let $A$ be a Borel subset of $\mathbb{R}^k.$ Thus, we have 
		$$\mu(A)=\mu(\cup_{i=1}^{N}(A\cap E_i))\leq \sum_{i=1}^{N}\mu(A\cap E_i)=\sum_{i=1}^{N}\mu_i(A\cap E_i)=\sum_{i=1}^{N}\sum_{j=1}^{N}p_{ji}\mu_j\circ f_{ij}^{-1}(A\cap E_i).$$
		This implies that 
		$$\mu(A)\leq \sum_{i=1}^{N}\sum_{j=1}^{N}p_{ji}\mu\circ f_{ij}^{-1}(A\cap E_i)\leq  \frac{1}{p_{\min}}\sum_{w\in \Omega_2}p_w\mu\circ f_w^{-1}(A).$$
		Thus, for $m=2,$ the inequality \eqref{eq4} holds. Now, we will prove inequality   \eqref{eq4} for $m > 2$. We have 
		\begin{align*}
			\mu_i=\sum_{j=1}^{N}p_{ji}\mu_j\circ f_{ij}^{-1}=\sum_{j=1}^{N}p_{ji}\bigg(\sum_{k=1}^{N}p_{kj}\mu_k\circ f_{jk}^{-1}\bigg)\circ f_{ij}^{-1}=\sum_{\substack{w=(i,j,k)\in \Omega_3\\1\leq j,k\leq N}}p_{ji}p_{kj}\mu_{k}\circ (f_{ij}\circ f_{jk})^{-1}.
		\end{align*}
		By  repeatedly applying the value of $\mu_k$ in the above expression, we get 
		\begin{align*}
			\mu_i&=\sum_{\substack{w=(i,w_2,w_3,w_4,\ldots,w_{m-1},w_m)\in \Omega_m}}p_{w_2 i}p_{w_3 w_2}\cdots p_{w_m w_{m-1}}\mu_{m}\circ (f_{i w_2}\circ f_{w_2 w_3}\circ \cdots f_{w_{m-1} w_m})^{-1}\\&=\sum_{\substack{w=(i,w_2,w_3,w_4,\ldots,w_{m-1},w_m)\in \Omega_m}}\frac{p_{w}}{p_{w_m}}\mu_{m}\circ f_{w}^{-1}
		\end{align*}
		Thus, by the above, we get
		\begin{align*}
			\mu(A)\leq \sum_{i=1}^{N}\mu_i(A\cap E_i)=& \sum_{i=1}^{N} \sum_{\substack{w=(i,w_2,w_3,w_4,\ldots,w_{m-1},w_m)\in \Omega_m}}\frac{p_{w}}{p_{w_m}}\mu_{m}\circ f_{w}^{-1}(A\cap E_i)\\ \leq& \sum_{i=1}^{N} \sum_{\substack{w=(i,w_2,w_3,w_4,\ldots,w_{m-1},w_m)\in \Omega_m}}\frac{p_{w}}{p_{w_m}}\mu \circ f_{w}^{-1}(A)\\\leq& \frac{1}{p_{\min}}\sum_{w\in \Omega_m}p_w\mu\circ f_w^{-1}(A).
		\end{align*}
		This completes the proof.
	\end{proof}
	Now, we define a matrix corresponding to the upper  contraction ratios of the mappings $f_{ij}$ for $1\leq i,j\leq N.$ Let us define an $N\times N$ matrix $M_{r,t}=[(p_{ji}s_{ij}^r)^t]_{1\leq i,j\leq N}$, where $r\in (0,\infty)$ and $t\in [0,\infty).$  Let  $\Phi_{r}(t)$ be the spectral radius of the matrix $M_{r,t}.$ Then, it is well-known that $\Phi_{r}(t)=\lim_{n\to \infty}{\|M_{r,t}^{n}\|}_{1}^{\frac{1}{n}}$ and  $\Phi_{r}(t)$ is the largest eigenvalue of $M_{r,t}$. Let $m_{ij}^{(n)}(r,t)$ be the $(i,j)$th entry of the matrix $M_{r,t}^{n},$ then
	$$m_{ij}^{(n)}(r,t)=\sum_{j_1,j_2,\ldots,j_{n-1}=1}^{N}
	\Big(p_{j_{1}i}p_{j_{2}j_1}\dots p_{j j_{n-1}}\Big(s_{ij_1}s_{j_1j_2}\dots s_{j_{n-1},j}\Big)^r\Big)^t.$$
	Thus, we have $$\Phi_{r}(t)=\lim_{n\to \infty}\Big( \sum_{i,j=1}^{N}m_{ij}^{(n)}(r,t)\Big)^\frac{1}{n}.$$
	Similarly, corresponding to the lower contraction ratios, we define a matrix $L_{r,t}=[(p_{ji}c_{ij}^r)^t]_{1\leq i,j\leq N}$ and $\phi_r(t)$ denotes the spectral radius of this matrix.
	\begin{defi}
		The recurrent IFS $\mathcal{I}=\{X; f_{ij}, p_{ij}: 1\leq i,j\leq N\}$ satisfies the strong open set condition (SOSC) if there exist nonempty bounded open sets $U_1,U_2,\ldots,U_N$ such that
		$$f_{ij}(U_j)\cap f_{kl}(U_l)=\emptyset~~\text{for}~~(i,j)\ne (k,l),~~f_{ij}(U_j)\subseteq U_i~~\text{and}~~E_i\cap U_i\ne \emptyset,~~\text{where}~~1\leq i,j,k,l\leq N.$$
	\end{defi}
	\begin{defi}
		The recurrent IFS $\mathcal{I}=\{X; f_{ij}, p_{ij}: 1\leq i,j\leq N\}$ satisfies the strong separation condition (SSC) if
		$$f_{ij}(E_j)\cap f_{kl}(E_l)=\emptyset~~\forall~~ (i,j)\ne (k,l)~~\text{and}~~p_{ji}p_{lk}>0,~~\text{where}~~1\leq i,j,k,l\leq N.$$
	\end{defi}
	A finite set $\Gamma \subset \Omega^*$ is said to be a finite maximal antichain if each sequence in $\Omega$ is an extension  of some element in $\Gamma$,  but no element of $\Gamma$ is an extension of another element of $\Gamma.$
	\par
	Let $\tau,w\in \Omega^*$ such that $\tau=(\tau_{1},\tau_{2},\ldots,\tau_{m})\in \Omega_m$, $w=(w_{1},w_{2},\ldots,w_{n})\in \Omega_n$ and $p_{\tau_{1}w_n}>0$. Then, we have
	$$s_{w\tau}=s_ws_{\tau}s_{w_n\tau_1}, \quad c_{w\tau}=c_wc_{\tau}c_{w_n\tau_1}, \quad p_{w\tau}=p_{w}p_{\tau}\frac{p_{\tau_{1}w_n}}{p_{w_{n}}}.$$
	By the above, we obtain
	$$ s_ws_{\tau}s_{\min}\leq s_{w\tau}\leq s_ws_{\tau}s_{\max},\quad c_wc_{\tau}c_{\min}\leq c_{w\tau}\leq c_wc_{\tau}c_{\max}, \quad p_{w}p_{\tau}\frac{P_{\min}}{p_{\max}} \leq
	p_{w\tau}\leq  p_{w}p_{\tau}\frac{P_{\max}}{p_{\min}}.$$
	Therefore, we get
	\begin{align}\label{2.3}
		(p_{w}s_{w}^r)(p_{\tau}s_{\tau}^r) A_r\leq p_{w\tau}s_{w\tau}^r\leq (p_{w}s_{w}^r)(p_{\tau}s_{\tau}^r)\Tilde{A}_r,\end{align}
	\begin{align}\label{2.4}
		(p_{w}c_{w}^r)(p_{\tau}c_{\tau}^r) B_r\leq p_{w\tau}c_{w\tau}^r\leq (p_{w}c_{w}^r)(p_{\tau}c_{\tau}^r)\Tilde{B}_r,
	\end{align}
	where $A_r=\min\{s_{\min}^r\frac{P_{\min}}{p_{\max}},1\},\quad  \Tilde{A}_r=\max\{s_{\max}^r \frac{P_{\max}}{p_{\min}},1\}, \quad B_r=\min\{c_{\min}^r\frac{P_{\min}}{p_{\max}},1\}$  and $ \Tilde{B}_r=\max \{c_{\max}^r \frac{P_{\max}}{p_{\min}},1\}.$
	\begin{remark}
		In the above construction of the bi-Lipschitz RIFS, if we put $f_{ij}=f_i, c_{ij}=c_i, s_{ij}=s_i$ and $p_{ij}=p_j$ for all $1\leq i,j\leq N,$ then the bi-Lipschitz RIFS reduces to the bi-Lipschitz IFS $\{X;f_1,f_2,\ldots,f_N\}$ with the probability vector $(p_1,p_2,
		\ldots,p_N).$  One can easily see that the limit set $E$ satisfies $E=\cup_{i=1}^{N}f_i(E)$ and $\mu$ becomes the invariant measure corresponding to this IFS such that $\mu=\sum_{i=1}^{N}p_i\mu\circ f_{i}^{-1}.$
		In this case, the matrix $M_{r,t}=[(p_is_{i}^{r})^t]_{1\leq i,j\leq N}.$ It is not difficult to see that the matrix $M_{r,t}$ is a rank-1 matrix. So, $0$ is an eigenvalue of $M_{r,t}$ with geometric multiplicity $N-1.$ This implies that the spectral radius $\Phi_{r}(t)=\sum_{i=1}^{N}(p_is_{i}^r)^t.$ Similarly, $\phi_{r}(t)=\sum_{i=1}^{N}(p_ic_{i}^r)^t.$
	\end{remark}
	
	\section{Main result}\label{se3}
	In the following theorem, which is the main theorem of the paper, we  determine the upper and the lower bounds of the quantization dimensions of the Borel probability measures supported on the limit sets of the bi-Lipschitz recurrent  IFS under the strong open set condition in terms of the spectral radius.
	\begin{theorem}\label{mainthm}
		Let $\mathcal{I}=\{X; f_{ij}, p_{ij}: 1\leq i,j\leq N\}$ be a bi-Lipschitz recurrent  IFS satisfying the strong open set condition such that $$c_{ij}d(x,y)\leq d(f_{ij}(x),f_{ij}(y))\leq s_{ij}d(x,y) \quad \forall \quad x,y\in X,$$
		where $0<c_{ij}\leq s_{ij}<1$ for $1\leq i,j\leq N.$
		Let $\mu$ be the Borel probability measure supported on the limit set $E$ of the RIFS $\mathcal{I}$ defined as in the above section. Then,
		$$l_r\leq\underline{D}_r(\mu)\leq \overline{D}_r(\mu)\leq k_r,$$
		where $l_r$ and $k_r$ are uniquely given by $\phi_{r}\Big(\frac{l_r}{r+l_r}\Big)=1$ and $\Phi_{r}\Big(\frac{k_r}{r+k_r}\Big)=1$, respectively.
	\end{theorem}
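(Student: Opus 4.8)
The plan is to trap $V_{n,r}(\mu)$ between two explicit powers of $n$ and then read off the statement from Proposition \ref{prop2.12}. Write $t_0=\tfrac{k_r}{r+k_r}$ and $s_0=\tfrac{l_r}{r+l_r}$, so $\Phi_r(t_0)=1$, $\phi_r(s_0)=1$, $\tfrac{r}{k_r}=\tfrac{1-t_0}{t_0}$ and $\tfrac{r}{l_r}=\tfrac{1-s_0}{s_0}$; since each entry of $M_{r,t}$ is strictly decreasing in $t$ (as $0<p_{ij}s_{ij}^r<1$) and the spectral radius of a nonnegative irreducible matrix is strictly monotone in its entries, $t\mapsto\Phi_r(t)$ and $t\mapsto\phi_r(t)$ are continuous and strictly decreasing with $\Phi_r(1)<1$ and $\lim_{t\downarrow 0}\Phi_r(t)\ge 1$, which is where the uniqueness of $k_r$ and $l_r$ comes from. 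The device replacing the self-similar identity $\sum_i(p_is_i^r)^{D_r/(r+D_r)}=1$ will be a conservation law along antichains: because $M_{r,t_0}$ is irreducible with spectral radius $1$, Perron--Frobenius gives $v=(v_1,\dots,v_N)>0$ with $\sum_j(p_{ij}s_{ij}^r)^{t_0}v_j=v_i$, and using the multiplicativity $(p_{wj}s_{wj}^r)^{t_0}=(p_ws_w^r)^{t_0}(p_{w_{|w|}j}s_{w_{|w|}j}^r)^{t_0}$ the weight $\omega(w):=(p_ws_w^r)^{t_0}v_{w_{|w|}}$ satisfies $\sum_{j:p_{w_{|w|}j}>0}\omega(wj)=\omega(w)$; iterating, $\sum_{w\in\Gamma}\omega(w)=\sum_ip_i^{t_0}v_i$ for \emph{every} finite maximal antichain $\Gamma$, and $\sum_w\omega(w)=\omega(u)$ when $\Gamma$ is the antichain of descendants of a fixed word $u$. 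I would set up the parallel weight $\omega'(w)=(p_wc_w^r)^{s_0}u_{w_{|w|}}$ with $u>0$ the Perron eigenvector of $L_{r,s_0}$, and for small $\epsilon>0$ the antichains $\Gamma_s(\epsilon)=\{w\in\Omega^*:(p_ws_w^r)^{t_0}<\epsilon\le(p_{w^-}s_{w^-}^r)^{t_0}\}$ and $\Gamma_c(\epsilon)$, defined the same way from $(p_wc_w^r)^{s_0}$; monotonicity of $m\mapsto(p_{\sigma|_m}s_{\sigma|_m}^r)^{t_0}$ makes these finite maximal antichains on which $(p_ws_w^r)^{t_0}\in[\epsilon(P_{\min}s_{\min}^r)^{t_0},\epsilon)$ (resp.\ the $c$-analogue), so the conservation law forces $|\Gamma_s(\epsilon)|\asymp 1/\epsilon$ and likewise for the sub-antichain of $\Gamma_c(\epsilon)$ sitting below any fixed one-letter word.

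For the \textbf{upper bound} I would let $A$ consist of one point of $E_w$ for each $w\in\Gamma_s(\epsilon)$, so $\#A=|\Gamma_s(\epsilon)|$. As $\{[w]:w\in\Gamma_s(\epsilon)\}$ partitions $\Omega$, $\pi(y)\in E_w$ on $[w]$, $\operatorname{diam}E_w\le s_w\operatorname{diam}X$ and $\nu([w])=p_w$, so
\[
V_{\#A,r}(\mu)\ \le\ \int_\Omega\rho(\pi(y),A)^r\,d\nu(y)\ \le\ (\operatorname{diam}X)^r\sum_{w\in\Gamma_s(\epsilon)}p_ws_w^r .
\]
Since $p_ws_w^r=\big((p_ws_w^r)^{t_0}\big)^{(1-t_0)/t_0}(p_ws_w^r)^{t_0}\le\epsilon^{(1-t_0)/t_0}(p_ws_w^r)^{t_0}$ and $\sum_{w\in\Gamma_s(\epsilon)}(p_ws_w^r)^{t_0}\le(\min_iv_i)^{-1}\sum_ip_i^{t_0}v_i$, this is $\lesssim\epsilon^{(1-t_0)/t_0}$. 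Choosing, for given $n$, a value $\epsilon=\epsilon_n\asymp 1/n$ with $|\Gamma_s(\epsilon_n)|\le n$ (possible since $|\Gamma_s(\epsilon)|\le C/\epsilon$) gives $V_{n,r}(\mu)\le V_{|\Gamma_s(\epsilon_n)|,r}(\mu)\lesssim n^{-(1-t_0)/t_0}=n^{-r/k_r}$, so $n\,e_{n,r}(\mu)^{k_r}$ is bounded and Proposition \ref{prop2.12}(1) yields $\overline D_r(\mu)\le k_r$.

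The \textbf{lower bound} is the hard part, precisely because $c_w$ and $s_w$ genuinely differ for bi-Lipschitz maps. First I would use the SOSC to produce, for each $i$, a word $v^{(i)}$ starting with $i$ whose piece $E_{v^{(i)}}$ lies in $U_i$ with $\eta_i':=\operatorname{dist}(E_{v^{(i)}},\mathbb{R}^k\setminus U_i)>0$: take $x_i\in E_i\cap U_i$, a code in $C(i)$ mapping to $x_i$, and a prefix long enough that the piece is small, positivity following from compactness; put $\eta'=\min_i\eta_i'$ and $M=\max_i|v^{(i)}|$. Fix one index $i_0$. For $w\in\Gamma_c(\epsilon)$ with first letter $i_0$ let $w'$ be $w$ followed by the tail of $v^{(w_{|w|})}$; then $E_{w'}=f_w(E_{v^{(w_{|w|})}})$ and $\nu([w'])\ge P_{\min}^M p_w$, and since $B(y,\eta'_{w_{|w|}})\subseteq U_{w_{|w|}}$ for $y\in E_{v^{(w_{|w|})}}$ while $f_w$ is injective with $\|f_w(x)-f_w(y)\|\ge c_w\|x-y\|$, the bi-Lipschitz image of that ball contains $B(f_w(y),c_w\eta'_{w_{|w|}})\subseteq f_w(U_{w_{|w|}})$ (by invariance of domain, $f_w(U_{w_{|w|}})$ being open in $\mathbb{R}^k$), whence $\operatorname{dist}\big(E_{w'},\mathbb{R}^k\setminus f_w(U_{w_{|w|}})\big)\ge c_w\eta'$. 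The sets $f_w(U_{w_{|w|}})$, over $w\in\Gamma_c(\epsilon)$ with first letter $i_0$, are pairwise disjoint: two such words first differ at some coordinate $j\ge 2$, and the SOSC at that coordinate, followed by the injective maps before it, separates their images. Now let $A_n$ be an $n$-optimal set of order $r$, and choose $\epsilon=\epsilon_n\asymp 1/n$ small enough that the number of such $w$ is at least $2n$ (it is $\asymp 1/\epsilon$ by the conservation law below $(i_0)$). By disjointness at most $n$ of them are \emph{bad} (i.e.\ meet $A_n$ inside $f_w(U_{w_{|w|}})$), so at least $n$ are \emph{good}, and for a good $w$ all of $A_n$ lies in $\mathbb{R}^k\setminus f_w(U_{w_{|w|}})$, hence $\rho(\pi(y),A_n)\ge c_w\eta'$ for $y\in[w']$; since the cylinders $[w']$ are disjoint,
\[
V_{n,r}(\mu)\ \ge\ \sum_{w\ \text{good}}(c_w\eta')^r\,\nu([w'])\ \ge\ (\eta')^rP_{\min}^M\sum_{w\ \text{good}}p_wc_w^r\ \gtrsim\ n\,\epsilon_n^{1/s_0}\ \asymp\ n^{\,1-1/s_0}=n^{-r/l_r},
\]
using $p_wc_w^r\ge\big(\epsilon_n(P_{\min}c_{\min}^r)^{s_0}\big)^{1/s_0}$ on $\Gamma_c(\epsilon_n)$. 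Thus $\liminf_n n\,e_{n,r}(\mu)^{l_r}>0$, and Proposition \ref{prop2.12}(2) gives $\underline D_r(\mu)\ge l_r$; with $\underline D_r\le\overline D_r$ this completes the proof.

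The main obstacle, as flagged, is the lower bound: there is no single contraction exponent, so (i) the size of the antichain $\Gamma_c(\epsilon)$ at the ``correct scale'' is available only up to constants, which is exactly what the Perron-eigenvector conservation law supplies in place of an exact renewal identity, and (ii) a ``good'' word must be made to cost a fixed multiple of $c_w$, not of the much larger $s_w$ — this is why each antichain piece is shrunk to the deeply nested sub-piece $E_{w'}\Subset f_w(U_{w_{|w|}})$ provided by the strong open set condition. Restricting the lower-bound count to words with a fixed first letter $i_0$ is what keeps the family $\{f_w(U_{w_{|w|}})\}$ genuinely disjoint, since in the recurrent setting the open sets $U_i$ may overlap.
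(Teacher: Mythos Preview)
Your proof is correct and, for the lower bound, takes a genuinely different and more direct route than the paper. The upper bound is essentially the paper's Proposition~\ref{prop3.7} (an antichain $\Gamma_s(\epsilon)$ plus counting), with the cosmetic difference that you invoke the Perron eigenvector of $M_{r,t_0}$ to get an \emph{exact} conservation law $\sum_{w\in\Gamma}\omega(w)=\mathrm{const}$ along antichains, whereas the paper uses the sub/super-multiplicativity bounds \eqref{2.3} to trap $\sum_{w\in\Omega_n}(p_ws_w^r)^{t_0}$ between fixed constants (Lemma~\ref{bb}, Lemma~\ref{lemboundfinite}). For the lower bound the approaches diverge sharply: the paper does \emph{not} argue directly on $\mu$. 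Instead it builds, for each $m$, an auxiliary RIFS $\mathcal{I}^m=\{g_w=f_w\circ f_{\sigma^{w_m}}:w\in\Omega_m\}$ satisfying the strong \emph{separation} condition (Proposition~\ref{SSC}), proves a reverse inequality $V_{n,r}(\mu_m)\ge\sum_{w\in\Omega_m}p_wC_w^rV_{n_w,r}(\mu_m)$ for the associated measure $\mu_m$, deduces $l_{m,r}\le\underline D_r(\mu_m)$ by induction on $n$, then compares $\underline D_r(\mu_m)$ with $\underline D_r(\mu)$ through box dimension and lets $m\to\infty$ (Proposition~\ref{prop3.11}). Your ``good words'' pigeonhole argument bypasses this entire auxiliary construction and in fact yields the stronger conclusion $\liminf_n n\,e_{n,r}(\mu)^{l_r}>0$ (positivity of the lower quantization coefficient), which the paper does not obtain. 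The price is the invariance-of-domain step giving $B(f_w(y),c_w\eta')\subseteq f_w(U_{w_{|w|}})$; this is legitimate here because $U_i$ is open in $\mathbb{R}^k$ and contained in $X$, so $f_w$ is defined on a genuine Euclidean ball around each $y\in E_{v^{(w_{|w|})}}$, but it would not survive in an abstract metric setting, where the paper's SSC-reduction argument still goes through. A minor remark: your restriction to a fixed first letter $i_0$ is unnecessary, since words in $\Omega^*$ have length $\ge 2$ and $f_w(U_{w_{|w|}})\subseteq f_{w_1w_2}(U_{w_2})$, so the SOSC at level two already makes the family $\{f_w(U_{w_{|w|}})\}_{w\in\Gamma_c(\epsilon)}$ pairwise disjoint across the whole antichain.
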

	\begin{remark}\label{clear}
		In \cite{GL2}, Graf and Luschgy gave the formula \eqref{for} for the quantization dimension of the invariant measure of the self-similar IFS under the OSC. However, in the above theorem, if we consider $f_{ij}=f_i,$ $p_{ij}=p_j$ and $c_{ij}=s_{ij}=s_i$, then the bi-Lipschitz recurrent IFS reduces to the self-similar IFS with the OSC, and we get the same formula \eqref{for} for the quantization dimension. Thus, our result is a general version of the Graf-Luschgy's result.
	\end{remark}
	
	\begin{remark}
		In \cite{R2}, Roychowdhury determined the upper and the lower bound of the quantization dimension of the invariant probability measure of the bi-Lipschitz IFS under the SOSC and some restrictions on the bi-Lipschitz constants. Note that in the above theorem, we are not taking any restrictions on the bi-Lipschitz constants as in \cite{R2}. By Remark \ref{clear} and the above theorem, we obtain the same bound of the quantization dimension as in Roychowdhury's paper \cite{R2}. Thus, our result generalizes the result of  Roychowdhury \cite{R2} in a more general setting.
	\end{remark}
	
	\begin{remark}\label{re3.4}
		In \cite{MK1}, the author determined the upper and the lower bound of the quantization dimension for the measure supported on the limit set of the hyperbolic (bi-Lipschitz) RIFS.  For proving the upper and the lower bound, the author claims that the measure $\mu$ supported on the  limit set of the hyperbolic (bi-Lipschitz) RIFS satisfies the following invariance relation
		\begin{equation}\label{eq10}
			\mu=\sum_{w\in \Omega_n}p_w\mu\circ f_w^{-1}	
		\end{equation}
		for all $n\geq 2$. However, the above invariance relation is not correct as shown below. Let us take a simple case that the RIFS satisfies the SSC. Let $w=(w_1,w_2,\ldots,w_{n+1})\in \Omega_{n+1}.$ Set $\tau:=w_{|_n}=(w_1,w_2,\ldots,w_n)$ and $\sigma_0:=(w_n,w_{n+1})$. Then, we have
		$$\mu(E_{w})=\nu\circ \pi^{-1}(E_{w})=\nu([w_1,w_2,\ldots,w_{n+1}])=p_{w}.$$ On the other hand, 
		$$\sum_{\sigma\in \Omega_n}p_{\sigma}\mu\circ f_{\sigma}^{-1}(E_w)= p_{\tau} \mu\circ f_{\tau}^{-1}(E_w)=p_{\tau} \mu(E_{\sigma_0})=p_{\tau} p_{\sigma_0}=p_{w_n}p_{w}.$$
		This implies that the invariance relation \eqref{eq10} does not hold for this simple case. Thus, the relation \eqref{eq10} is not valid for RIFS. In \cite{R9}, the author used the same invariance relation \eqref{eq10} for the measure $\mu$  supported on the  limit set of the self-similar RIFS in determining  the quantization dimension of the measure $\mu,$ that is also not valid. In this paper, we modify the invariance relation for the measure $\mu$ and provide a different technique to determine the quantization dimension of the measure $\mu$ supported on the limit set of the bi-Lipschitz RIFS. Since the self-similar RIFS is a particular case of the bi-Lipschitz RIFS, this paper can be treated as the corrected version of the papers  \cite{R9} and \cite{MK1}.
	\end{remark}
	In order to prove Theorem~\ref{mainthm}, we need some lemmas and propositions.
	\par
	
	\begin{lemma}\label{unique}
		Let $r\in (0,\infty)$ be fixed. Then,  there exist unique $k_r,l_r\in (0,\infty)$ such that
		$$\Phi_r\Big(\frac{k_r}{r+k_r}\Big)=1\quad\text{and}\quad  \phi_r\Big(\frac{l_r}{r+l_r}\Big)=1.$$
	\end{lemma}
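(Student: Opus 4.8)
The plan is to turn the statement into a one-variable intermediate-value argument for the function $t\mapsto\Phi_r(t)$, and then transport the conclusion along the change of variable $t=\frac{k}{r+k}$. Set $\psi_r(k)=\frac{k}{r+k}$; since $\psi_r'(k)=\frac{r}{(r+k)^2}>0$, the map $\psi_r$ is a continuous strictly increasing bijection of $(0,\infty)$ onto $(0,1)$, with inverse $k=\frac{rt}{1-t}$. Hence it suffices to show there is a unique $t^*\in(0,1)$ with $\Phi_r(t^*)=1$, and then set $k_r=\frac{rt^*}{1-t^*}\in(0,\infty)$. The argument for $l_r$ is word-for-word the same with $c_{ij}$, $L_{r,t}$, $\phi_r$ in place of $s_{ij}$, $M_{r,t}$, $\Phi_r$, so I would only carry out the case of $\Phi_r$.

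First I would establish three properties of $\Phi_r$ on $[0,\infty)$, writing $\rho(\cdot)$ for the spectral radius. \emph{Continuity:} each entry $(p_{ij}s_{ij}^r)^t$ is continuous in $t$ (with the convention that an entry vanishing because $p_{ij}=0$ stays $0$, so that $M_{r,t}\to A_0$ as $t\to0^+$, where $A_0$ is the $0$–$1$ incidence matrix of $P$), and the spectral radius depends continuously on the matrix entries, so $\Phi_r$ is continuous. \emph{Strict monotonicity:} if $0\le t_1<t_2$, then for every pair with $p_{ij}>0$ one has $0<p_{ij}s_{ij}^r<1$ (because $s_{ij}<1$), hence $(p_{ij}s_{ij}^r)^{t_2}<(p_{ij}s_{ij}^r)^{t_1}$, while the zero entries are unchanged; thus $0\le M_{r,t_2}\le M_{r,t_1}$ entrywise with strict inequality at every positive entry of $P$. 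Since $P$ is irreducible, $M_{r,t_1}$ shares its incidence pattern and is irreducible, so the strict monotonicity of the Perron–Frobenius eigenvalue under entrywise domination of an irreducible matrix gives $\Phi_r(t_2)<\Phi_r(t_1)$. \emph{Endpoint values:} $M_{r,1}=[p_{ij}s_{ij}^r]$ satisfies $0\le M_{r,1}\le P$ with strict inequality at every positive entry, $P$ is irreducible, and $\rho(P)=1$ because $P$ is row stochastic, so the same comparison yields $\Phi_r(1)<1$; moreover $\Phi_r(0)=\rho(A_0)\ge1$ since $A_0$ is an irreducible nonnegative integer matrix with $N\ge2$, with equality only when $A_0$ is a single-cycle permutation matrix — in which case each $E_i$, hence $E$, is a single point and the theorem is trivial — so in the nondegenerate situation $\Phi_r(0)>1$.

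Given these, $\Phi_r$ is continuous and strictly decreasing on $[0,1]$ with $\Phi_r(0)>1>\Phi_r(1)$, so by the intermediate value theorem there is exactly one $t^*\in(0,1)$ with $\Phi_r(t^*)=1$. Then $k_r:=\frac{rt^*}{1-t^*}\in(0,\infty)$ satisfies $\psi_r(k_r)=t^*$, hence $\Phi_r\!\big(\tfrac{k_r}{r+k_r}\big)=1$, and uniqueness of $k_r$ follows from uniqueness of $t^*$ together with injectivity of $\psi_r$. The corresponding $l_r$ is obtained verbatim from $\phi_r$ and $c_{ij}$. The only genuinely delicate step is the monotonicity/endpoint part: one must observe that $M_{r,t}$ inherits irreducibility from $P$ so that the \emph{strict} Perron–Frobenius comparison applies, and be consistent about the $t=0$ endpoint; everything else is routine bookkeeping.
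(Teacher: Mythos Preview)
Your proof is correct and follows the same route as the paper's: establish that $\Phi_r$ is continuous and strictly decreasing on $[0,1]$ with $\Phi_r(0)>1>\Phi_r(1)$, apply the intermediate value theorem to obtain a unique $t^*\in(0,1)$, and then transport along the bijection $t\mapsto rt/(1-t)$. Your treatment is in fact more careful than the paper's---where the paper asserts continuity and monotonicity as ``clear'' and computes $\Phi_r(0)=N$ via an implicit $0^0=1$ convention, you justify strict monotonicity through Perron--Frobenius comparison (using that $M_{r,t}$ inherits irreducibility from $P$), correctly identify $\Phi_r(0^+)=\rho(A_0)$, and isolate the degenerate single-cycle case; for $\Phi_r(1)<1$ the paper's direct bound $\Phi_r(1)\le s_{\max}^r$ is marginally simpler than your comparison with the stochastic matrix $P$, but both arguments are valid.
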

	\begin{proof}
		The mapping $\Phi_r: [0,\infty)\to [0,\infty)$ is defined as
		$$\Phi_r(t)=\lim_{n\to \infty}\Big( \sum_{i,j=1}^{N}m_{ij}^{(n)}(r,t)\Big)^\frac{1}{n},$$
		where $m_{ij}^{(n)}(r,t)$ defined as in the above section. Clearly, $\Phi_r$ is a continuous and strictly decreasing mapping,  and
		$$\Phi_r(0)=\lim_{n\to \infty}\Big( \sum_{i,j=1}^{N}m_{ij}^{(n)}(r,0)\Big)^\frac{1}{n}=\lim_{n\to \infty}\Big(\sum_{i,j=1}^{N} N^{n-1} \Big)^\frac{1}{n}=N\geq 2.$$
		Moreover, \begin{align*}
			\Phi_r(1)&=\lim_{n\to \infty}\Big( \sum_{i,j=1}^{N}m_{ij}^{(n)}(r,1)\Big)^\frac{1}{n}\\&=\lim_{n\to \infty}\Big(\sum_{i,j=1}^{N}\sum_{j_1,j_2,\dots,j_{n-1}=1}^{N}
			(p_{j_{1}i}p_{j_{2}j_1}\dots p_{j j_{n-1}}(s_{ij_1}s_{j_1j_2}\dots s_{j_{n-1},j})^r)\Big)^\frac{1}{n}\\& \leq s_{\max}^r<1.
		\end{align*}
		Since $\Phi_r$ is continuous, strictly decreasing, $\Phi_r(0)\geq2$ and $\Phi_r(1)<1$, there exists a unique $t_r\in (0,1)$ such that $\Phi_r(t_r)=1.$ Since $t_r\in (0,1)$, there exists a unique $k_r\in (0,\infty)$ such that $t_r=\frac{k_r}{r+k_r}$.
		Similarly, there exits a unique $l_r\in (0,\infty)$ such that $\phi_r\Big(\frac{l_r}{r+l_r}\Big)=1.$ This completes the proof.
	\end{proof}
	\begin{lemma}
		Let $r\in (0,\infty)$ be fixed and let $k_r,l_r$ be defined as in the above lemma. Then,
		$$\lim_{n\to \infty}\frac{1}{n}\log\sum_{w\in \Omega_n}(p_w {c_w}^r)^\frac{l_r}{r+l_r}=0\quad \text{and}\quad  \lim_{n\to \infty}\frac{1}{n}\log\sum_{w\in \Omega_n}(p_w {s_w}^r)^\frac{k_r}{r+k_r}=0.$$
	\end{lemma}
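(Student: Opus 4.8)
The plan is to reduce both limits to the exponential growth rate of the entry sums of the powers of the transfer matrices $M_{r,t}$ and $L_{r,t}$, which is already identified with the spectral radii $\Phi_r(t)$ and $\phi_r(t)$ by the formula $\Phi_r(t)=\lim_{n\to\infty}\big(\sum_{i,j}m_{ij}^{(n)}(r,t)\big)^{1/n}$ recorded in Section~\ref{se2} (and its analogue for $\phi_r$). Throughout write $t_r=\frac{k_r}{r+k_r}$ and $u_r=\frac{l_r}{r+l_r}$, so that $\Phi_r(t_r)=1$ and $\phi_r(u_r)=1$ by Lemma~\ref{unique}. I treat the $s_w$-statement in detail; the $c_w$-statement follows verbatim after replacing $M_{r,t},\Phi_r,k_r,t_r$ by $L_{r,t},\phi_r,l_r,u_r$.

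\textbf{Step 1 (a path-counting identity).} Fix $t\in(0,\infty)$ and $n\ge 2$. Using $p_w=p_{w_1}\,p_{w_1w_2}\cdots p_{w_{n-1}w_n}$ and $s_w=s_{w_1w_2}\cdots s_{w_{n-1}w_n}$, and grouping the sum over $w=(w_1,\dots,w_n)\in\Omega_n$ according to the first and last symbols $w_1=i$, $w_n=j$, one gets
$$\sum_{w\in\Omega_n}(p_w s_w^{\,r})^t=\sum_{i,j=1}^N p_i^{\,t}\sum_{\substack{w\in\Omega_n\\ w_1=i,\ w_n=j}}\big(p_{w_1w_2}\cdots p_{w_{n-1}w_n}\big)^t\big(s_{w_1w_2}\cdots s_{w_{n-1}w_n}\big)^{rt}=\sum_{i,j=1}^N p_i^{\,t}\,m_{ij}^{(n-1)}(r,t),$$
because for fixed endpoints the inner sum runs over all admissible paths of $n-1$ transitions from $i$ to $j$, while terms with a vanishing transition probability contribute nothing, so it coincides exactly with the $(i,j)$-entry $m_{ij}^{(n-1)}(r,t)$ of $M_{r,t}^{\,n-1}$.

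\textbf{Step 2 (discard the initial weight and take logs).} Since $(p_1,\dots,p_N)$ is a probability vector we have $p_{\min}^{\,t}\le p_i^{\,t}\le 1$, so
$$p_{\min}^{\,t}\sum_{i,j=1}^N m_{ij}^{(n-1)}(r,t)\ \le\ \sum_{w\in\Omega_n}(p_w s_w^{\,r})^t\ \le\ \sum_{i,j=1}^N m_{ij}^{(n-1)}(r,t).$$
Applying $\tfrac1n\log(\cdot)$ and using $\tfrac1n\log p_{\min}^{\,t}\to 0$ together with $\tfrac1n\log\sum_{i,j}m_{ij}^{(n-1)}(r,t)=\tfrac{n-1}{n}\cdot\tfrac{1}{n-1}\log\sum_{i,j}m_{ij}^{(n-1)}(r,t)\to\log\Phi_r(t)$ gives $\lim_{n\to\infty}\tfrac1n\log\sum_{w\in\Omega_n}(p_w s_w^{\,r})^t=\log\Phi_r(t)$; here the limit exists and the logarithm is legitimate because, for $t\in(0,1)$, $M_{r,t}$ is a nonnegative irreducible matrix (it has the same zero pattern as $P$), so $\Phi_r(t)$ is its positive Perron eigenvalue. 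Now specialize to $t=t_r$: since $\Phi_r(t_r)=1$ the limit equals $\log 1=0$, which is the first assertion; repeating Steps 1--2 with $c_w,L_{r,t},\phi_r$ and $t=u_r$, where $\phi_r(u_r)=1$, yields the second.

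\textbf{Main obstacle.} There is no genuine difficulty here: the substance is already encapsulated in the spectral-radius formula of Section~\ref{se2}. The only point requiring care is the bookkeeping in Step 1 --- correctly matching the path length in $\Omega_n$ (which has $n$ symbols, hence $n-1$ transitions) with the matrix power $M_{r,t}^{\,n-1}$, and checking that the extra initial factor $p_{w_1}$ present in $p_w$ but absent from $m_{ij}^{(n-1)}(r,t)$ is bounded between the constants $p_{\min}^{\,t}$ and $1$, so that it contributes nothing to the normalized logarithmic limit.
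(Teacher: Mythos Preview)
Your proof is correct and follows essentially the same route as the paper's: both express $\sum_{w\in\Omega_n}(p_w s_w^r)^t$ in terms of the entry sums of powers of $M_{r,t}$, bound away the initial factor $p_{w_1}^t$ by constants, invoke the spectral-radius formula $\Phi_r(t)=\lim_n\big(\sum_{i,j}m_{ij}^{(n)}(r,t)\big)^{1/n}$, and specialize to $t=k_r/(r+k_r)$. Your bookkeeping is in fact slightly cleaner --- you correctly match $\Omega_n$ with $M_{r,t}^{\,n-1}$ and note the irreducibility of $M_{r,t}$ to justify $\Phi_r(t)>0$.
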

	\begin{proof}
		Consider the mapping  $\theta_r: [0,\infty)\to \mathbb{R}$ defined by $\theta_r(t)=\lim_{n\to \infty}\frac{1}{n}\log\sum_{w\in \Omega_n}(p_w {s_w}^r)^t.$ By \eqref{2.3}, one can easily see that $\theta_r(t)$ exists. Thus,
		\begin{align*}
			\theta_r(t)&=\lim_{n\to \infty}\frac{1}{n}\log\Big(\sum_{i,j_1,j_2,\dots,j_{n-1},j=1}^{N}(p_{j_{1}i}p_{j_{2}j_1}\dots p_{j j_{n-1}}p_{j} ({s_{ij_1}s_{j_1j_2}\dots s_{j_{n-1}j}})^r)^t\Big)\\&=  \lim_{n\to \infty}\frac{1}{n}\log\Big(\sum_{i,j=1}^{N}p_j^{t}m_{ij}^{(n)}(r,t)\Big).
		\end{align*}
		Therefore, we get
		$$\lim_{n\to \infty}\frac{1}{n}\log\Big(\sum_{i,j=1}^{N}p_{\min}^{t}m_{ij}^{(n)}(r,t)\Big)\leq \theta_r(t)\leq \lim_{n\to \infty}\frac{1}{n}\log\Big(\sum_{i,j=1}^{N}p_{\max}^{t}m_{ij}^{(n)}(r,t)\Big),$$
		$$\lim_{n\to \infty}\frac{1}{n}\log\Big(\sum_{i,j=1}^{N}m_{ij}^{(n)}(r,t)\Big)\leq \theta_r(t)\leq \lim_{n\to \infty}\frac{1}{n}\log\Big(\sum_{i,j=1}^{N}m_{ij}^{(n)}(r,t)\Big).$$
		Thus, we obtain $\log(\Phi_r(t))=\theta_r(t).$
		By using the previous lemma, we get $\theta_r(\frac{k_r}{r+k_r})=0.$ By using the similar arguments as above, we obtain $\lim\limits_{n\to \infty}\frac{1}{n}\log\sum\limits_{w\in \Omega_n}(p_w {c_w}^r)^\frac{l_r}{r+l_r}=0.$
		Thus, the proof is complete.
	\end{proof}
	\begin{lemma}\label{bb}
		Let $r\in (0,\infty)$ be fixed and let $k_r,l_r$ be defined as in Lemma \ref{unique}. Then, for each $n\geq 2$
		$$F_{r}^{\frac{-k_r}{r+k_r}}\leq \sum_{w\in \Omega_n}(p_ws_{w}^r)^{\frac{k_r}{r+k_r}}\leq F_{r}^{\frac{k_r}{r+k_r}} \text{ and }  G_{r}^{\frac{-l_r}{r+l_r}}\leq \sum_{w\in \Omega_n}(p_wc_{w}^r)^{\frac{l_r}{r+l_r}}\leq G_{r}^{\frac{l_r}{r+l_r}},$$
		where $F_r=\max\{A_{r}^{-1},\Tilde{A}_r\}$ and $G_r=\max\{B_{r}^{-1},\Tilde{B}_r\}$.
	\end{lemma}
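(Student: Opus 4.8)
The statement splits into two parallel halves; I will describe the one involving $k_r$, $s_w$ and $F_r$ (the half with $l_r$, $c_w$, $G_r$ is proved verbatim, using \eqref{2.4}, $c_w$, $\phi_r$, $G_r$ in place of \eqref{2.3}, $s_w$, $\Phi_r$, $F_r$). Write $t:=\tfrac{k_r}{r+k_r}$, which lies in $(0,1)$ by Lemma \ref{unique}, and set $b_n:=\sum_{w\in\Omega_n}(p_ws_w^r)^t$ for $n\ge 2$; the goal is $F_r^{-t}\le b_n\le F_r^{t}$. Since for $w=(w_1,\dots,w_n)$ one has $p_ws_w^r=p_{w_1}\prod_{i=1}^{n-1}(p_{w_iw_{i+1}}s_{w_iw_{i+1}}^r)$, it follows that $(p_ws_w^r)^t=p_{w_1}^t\prod_{i=1}^{n-1}(M_{r,t})_{w_iw_{i+1}}$, and summing over all tuples (the vanishing factors kill the forbidden transitions) yields the matrix form $b_n=\mathbf q\,M_{r,t}^{\,n-1}\mathbf 1$, where $\mathbf q=(p_1^t,\dots,p_N^t)$ and $\mathbf 1=(1,\dots,1)^{\top}$.

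The first step is to record the multiplicative structure coming from \eqref{2.3}. The map $(w,\tau)\mapsto w\tau$ is a bijection from $\{(w,\tau)\in\Omega_m\times\Omega_n:\ p_{w_m\tau_1}>0\}$ onto $\Omega_{m+n}$, and raising \eqref{2.3} to the power $t$ gives, on such pairs, $A_r^{t}(p_ws_w^r)^t(p_\tau s_\tau^r)^t\le (p_{w\tau}s_{w\tau}^r)^t\le \tilde A_r^{t}(p_ws_w^r)^t(p_\tau s_\tau^r)^t$. Summing over these pairs and enlarging to the unconstrained double sum in the upper estimate produces the submultiplicative inequality $b_{m+n}\le \tilde A_r^{t}\,b_m\,b_n$ for all $m,n\ge 2$.

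This already yields the lower bound. The sequence $m\mapsto \tilde A_r^{t}b_m$ is submultiplicative, so by Fekete's lemma $(\tilde A_r^{t}b_m)^{1/m}$ converges to $\inf_m(\tilde A_r^{t}b_m)^{1/m}$; by the preceding lemma $\tfrac1m\log b_m\to\log\Phi_r(t)=0$, so this infimum equals $1$, whence $\tilde A_r^{t}b_m\ge 1$ for every $m$, i.e. $b_m\ge \tilde A_r^{-t}\ge F_r^{-t}$, the last step because $F_r=\max\{A_r^{-1},\tilde A_r\}\ge\tilde A_r$.

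The upper bound is the delicate half, and the hard point is twofold: the companion inequality $b_{m+n}\ge A_r^{t}b_mb_n$ can fail, since concatenation of code words is restricted by the positivity pattern of $P$ (e.g.\ when $P$ is periodic), and in any case $\tfrac1n\log b_n\to 0$ alone does not force $b_n$ to be bounded. To get genuine boundedness I would exploit that, because $t>0$, the matrix $M_{r,t}$ has exactly the zero-pattern of $P$ and is therefore irreducible, with $\Phi_r(t)=1$ a simple Perron eigenvalue; fixing a strictly positive right eigenvector $\mathbf u$ with $M_{r,t}\mathbf u=\mathbf u$, normalized so that $\min_i u_i=1$, one has $\mathbf 1\le\mathbf u$ entrywise, hence $M_{r,t}^{\,n-1}\mathbf 1\le M_{r,t}^{\,n-1}\mathbf u=\mathbf u$, and consequently $b_n=\mathbf q\,M_{r,t}^{\,n-1}\mathbf 1\le \mathbf q\mathbf u=\sum_i p_i^{t}u_i$ uniformly in $n$. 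The last and finickiest step is to bring this uniform bound down to the explicit value $F_r^{t}$: one re-uses the eigen-relation $u_i=\sum_j(p_{ij}s_{ij}^r)^tu_j$ together with the pointwise comparisons underlying \eqref{2.3}, the ``$\max$'' in $F_r=\max\{A_r^{-1},\tilde A_r\}$ being precisely what absorbs the factors $A_r^{\pm t}$ and $\tilde A_r^{\pm t}$ that occur. I expect this identification of the constant — converting ``bounded by the Perron data'' into the explicit estimate $b_n\le F_r^{t}$ while respecting the transition constraint — to be the main obstacle; the lower bound and the reduction to the preceding lemma are routine once \eqref{2.3} is available, and the $l_r/c_w/G_r$ half follows word for word from \eqref{2.4}.
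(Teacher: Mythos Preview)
Your lower bound argument is essentially the paper's. The paper writes $\theta_r(t)=\lim_{m\to\infty}\frac{1}{nm}\log b_{nm}$ for fixed $n$, and from \eqref{2.3} (iterated $m-1$ times) obtains the sandwich
\[
\Bigl(\sum_{w\in\Omega_n}(p_ws_w^rA_r)^t\Bigr)^{m}\ \le\ b_{nm}\ \le\ \Bigl(\sum_{w\in\Omega_n}(p_ws_w^r\tilde A_r)^t\Bigr)^{m},
\]
takes logarithms, divides by $nm$, lets $m\to\infty$, and reads off $\tilde A_r^{-t}\le b_n\le A_r^{-t}$ once $\theta_r(t)=0$. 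The upper half of this sandwich is exactly your submultiplicativity-plus-Fekete step, just phrased as a direct comparison with the limit rather than invoking Fekete by name.

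Where you diverge is the upper bound on $b_n$. The paper does \emph{not} go through Perron--Frobenius eigenvectors; it simply uses the lower half of the same sandwich (the supermultiplicative companion $b_{nm}\ge (A_r^t b_n)^m$) and obtains $b_n\le A_r^{-t}\le F_r^{t}$ immediately, with the explicit constant falling out for free. Your concern that this companion inequality can fail when $P$ has zero entries is legitimate --- decomposing $w\in\Omega_{nm}$ into $m$ blocks gives only a \emph{restricted} product over compatible tuples, which is $\le b_n^m$ rather than $\ge b_n^m$ --- and the paper's proof glosses over exactly this point. So the step you flag as ``the main obstacle'' is one the paper simply asserts.

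Your Perron--Frobenius route is therefore a genuinely different and more cautious approach for the upper half: it handles the transition constraint honestly and yields uniform boundedness of $b_n$, but, as you yourself note, it does not recover the explicit constant $F_r^{t}$ without further work. By contrast, the paper's symmetric sandwich gives both constants in one stroke, at the cost of the unaddressed issue you identified. In short: same strategy for the lower bound; for the upper bound the paper is shorter and yields the stated constant directly, while your argument is more careful about the combinatorics but incomplete as written.
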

	\begin{proof}
		We consider the same mapping $\theta_r$ as in the previous lemma. Then, we have $\theta_r(t)=\lim_{n\to \infty}\frac{1}{n}\log\sum_{w\in \Omega_n}(p_w {s_w}^r)^t.$
		We can also write $\theta_r(t)$ as follows
		$$\theta_r(t)=\lim_{m\to \infty}\frac{1}{nm}\log\sum_{w\in \Omega_{nm}}(p_w {s_w}^r)^t~~\forall~~ n\in \mathbb{N}.$$
		Thus, by the above and inequality \eqref{2.3}, we get
		$$\lim_{m\to \infty}\frac{1}{nm}\log(\sum_{w\in \Omega_{n}}(p_w {s_w}^r {A}_r)^t)^m\leq \theta_r(t)\leq \lim_{m\to \infty}\frac{1}{nm}\log(\sum_{w\in \Omega_{n}}(p_w {s_w}^r \Tilde{A}_r)^t)^m.$$
		This implies that
		$$\frac{1}{n}\log\sum_{w\in \Omega_{n}}(p_w {s_w}^r {A}_r)^t\leq \theta_r(t)\leq \frac{1}{n}\log\sum_{w\in \Omega_{n}}(p_w {s_w}^r \Tilde{A}_r)^t.$$
		Therefore, we obtain
		$$e^{n \theta_r(t)}\Tilde{A}_r^{-t}\leq \sum_{w\in \Omega_{n}}(p_w {s_w}^r )^t\leq e^{n \theta_r(t)}A_r^{-t}.$$
		Let $F_r=\max\{A_{r}^{-1},\Tilde{A}_r\}$. By the previous lemma, $\theta_r(\frac{k_r}{r+k_r})=0.$ Thus, we get
		$$F_{r}^{\frac{-k_r}{r+k_r}}\leq \sum_{w\in \Omega_n}(p_ws_{w}^r)^{\frac{k_r}{r+k_r}}\leq F_{r}^{\frac{k_r}{r+k_r}}.$$
		Similarly, we can get the other inequality by taking $c_w$ instead of $s_w$ in the above. This completes the proof.
	\end{proof}
	\begin{lemma}\label{lemboundfinite}
		Let $r\in (0,\infty)$ be fixed and let $\Gamma$ be a finite maximal antichain. Then, there exist constants $\eta_r\geq 1$ and  $\Tilde{\eta}_r\geq 1$ such that
		$$\sum_{w\in \Gamma}(p_ws_{w}^r)^{\frac{k_r}{r+k_r}}\leq \eta_r\quad\text{and} \quad\sum_{w\in \Gamma}(p_wc_{w}^r)^{\frac{k_r}{r+k_r}}\leq \Tilde{\eta}_r.$$
	\end{lemma}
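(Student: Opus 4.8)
The plan is to reduce the sum over an arbitrary finite maximal antichain $\Gamma$ to a sum over a single ``level'' $\Omega_n$, where $n$ is the largest length occurring in $\Gamma$ and where Lemma~\ref{bb} already supplies a $\Gamma$-independent bound. Throughout write $t_r:=\frac{k_r}{r+k_r}\in(0,1)$, so that $\Phi_r(t_r)=1$ by Lemma~\ref{unique}.

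First I would record the linear-algebraic input. Since $P$ is irreducible and $(p_{ij}s_{ij}^r)^{t_r}>0$ precisely when $p_{ij}>0$, the matrix $M_{r,t_r}$ is nonnegative and irreducible with Perron eigenvalue $\Phi_r(t_r)=1$; by the Perron--Frobenius theorem it admits a strictly positive right eigenvector $u=(u_1,\dots,u_N)$ with $M_{r,t_r}u=u$. Comparing the all-ones vector $\mbf{1}$ with $u$ componentwise and applying powers of $M_{r,t_r}$ (which fix $u$) produces a constant $\kappa:=\frac{\min_j u_j}{\max_j u_j}\in(0,1]$ such that
$$\big(M_{r,t_r}^{\,m}\mbf{1}\big)_i\ \ge\ \kappa\qquad\text{for every }m\ge 0\text{ and every }i\in\{1,\dots,N\}.$$
It is here, and only here, that I use that the spectral radius is exactly $1$: any gap between $\Phi_r(t_r)$ and $1$ would let this bound degenerate as $m\to\infty$.

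Next comes the combinatorial step. Given $\Gamma$, put $n:=\max\{|w|:w\in\Gamma\}\ge 2$, and for $w\in\Gamma$ of length $\ell$ let $\Gamma_w:=\{v\in\Omega_n:v|_\ell=w\}$ be the set of extensions of $w$ to length $n$ (nonempty by irreducibility). Since $\Gamma$ is an antichain the $\Gamma_w$ are pairwise disjoint, and since $\Gamma$ is maximal each $v\in\Omega_n$ has a unique prefix in $\Gamma$, so $\Omega_n=\bigsqcup_{w\in\Gamma}\Gamma_w$. For $v\in\Gamma_w$ there is the factorization $p_vs_v^r=(p_ws_w^r)\prod_{a=\ell}^{n-1}p_{v_av_{a+1}}s_{v_av_{a+1}}^r$ (with the convention $v_\ell:=w_\ell$); raising to the power $t_r$ and summing over the free symbols $v_{\ell+1},\dots,v_n$ against the entries of $M_{r,t_r}$ gives
$$\sum_{v\in\Gamma_w}(p_vs_v^r)^{t_r}=(p_ws_w^r)^{t_r}\big(M_{r,t_r}^{\,n-\ell}\mbf{1}\big)_{w_\ell}\ \ge\ \kappa\,(p_ws_w^r)^{t_r}.$$
Summing over $w\in\Gamma$, using the partition of $\Omega_n$, and then Lemma~\ref{bb} I obtain
$$\kappa\sum_{w\in\Gamma}(p_ws_w^r)^{t_r}\ \le\ \sum_{v\in\Omega_n}(p_vs_v^r)^{t_r}\ \le\ F_r^{t_r},$$
so $\eta_r:=\max\{1,\kappa^{-1}F_r^{t_r}\}$ works and depends only on $r$, not on $\Gamma$. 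For the inequality with $c_w$ I would simply observe that $c_{ij}\le s_{ij}$ forces $c_w\le s_w$, hence $(p_wc_w^r)^{t_r}\le(p_ws_w^r)^{t_r}$ for every $w$, and take $\tilde\eta_r:=\eta_r$.

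The hard part is the uniform lower bound on $\big(M_{r,t_r}^{\,n-\ell}\mbf{1}\big)_{w_\ell}$: one cannot afford to replace each $w\in\Gamma$ by just one length-$n$ extension, since that produces a factor like $(P_{\min}s_{\min}^r)^{nt_r}$ which deteriorates as $n$ grows; summing over \emph{all} length-$n$ extensions and then invoking $\Phi_r(t_r)=1$ is exactly what keeps the constant bounded. The remaining steps are routine bookkeeping with the definitions of $p_w$, $s_w$, $c_w$ and the identities collected in Section~\ref{se2}.
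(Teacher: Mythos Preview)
Your argument is correct. The overall plan---partition a large level $\Omega_n$ into the blocks of length-$n$ extensions of each $w\in\Gamma$, bound each block from below by a uniform constant times $(p_ws_w^r)^{t_r}$, and then cap the level sum with Lemma~\ref{bb}---is the same skeleton the paper uses. The genuine difference is in how the uniform lower bound on the tail is produced. The paper factors via the inequalities~\eqref{2.3}, obtaining $(P_{\min}p_{\max}^{-1}s_{\min}^r)^{t_r}\sum_{\tau\in\Omega_{n-n_i}}(p_\tau s_\tau^r)^{t_r}$, and then invokes the \emph{lower} half of Lemma~\ref{bb} a second time; it then argues the $c_w$ case ``similarly.'' You instead recognize the tail sum exactly as $(M_{r,t_r}^{\,n-\ell}\mathbf 1)_{w_\ell}$ and bound it below by $\kappa=\min_ju_j/\max_ju_j$ via the Perron right eigenvector at eigenvalue $\Phi_r(t_r)=1$. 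This buys you two things: the factorization of $p_vs_v^r$ is an equality rather than an inequality, and the admissibility constraint $p_{w_\ell v_{\ell+1}}>0$ is absorbed automatically (the corresponding matrix entries vanish), a point that requires more care in the paper's term-by-term estimate $p_{w_{n_i}\tau_1}\ge P_{\min}$ when some $p_{w_{n_i}\tau_1}=0$. Your shortcut $(p_wc_w^r)^{t_r}\le(p_ws_w^r)^{t_r}$ for the second inequality is also cleaner than rerunning the argument. The trade-off is that you import Perron--Frobenius, whereas the paper stays within the elementary estimates already set up in Section~\ref{se2}.
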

	\begin{proof}
		Since $\Gamma$ is a finite maximal antichain,  we have $\Gamma=\bigcup_{i=1}^{K}\Gamma_{n_i}$ for some $K>0$, where $\Gamma_{n_i}=\{w\in \Gamma: |w|=n_i\}$ for all $i\in \{1,2,\dots,K\}$. Let $n> 1+\max\{n_1,n_2,\dots,n_K\}.$ Then,  by the previous lemma, we have
		\begin{align*}
			F_{r}^{\frac{k_r}{r+k_r}}&\geq  \sum_{\tau\in \Omega_n}(p_\tau s_{\tau}^r)^{\frac{k_r}{r+k_r}}=
			\sum_{i=1}^{K}\sum_{w\in\Gamma_{n_i}}\sum_{\tau\in \Omega_n, w\mathcal{<}\tau} (p_\tau s_{\tau}^r)^{\frac{k_r}{r+k_r}}=
			\sum_{i=1}^{K}\sum_{w\in\Gamma_{n_i}}\sum_{\tau\in \Omega_{n-n_i}} (p_{w\tau} s_{w\tau}^r)^{\frac{k_r}{r+k_r}}\\&=\sum_{i=1}^{K}\sum_{w\in\Gamma_{n_i}}(p_ws_{w}^r)^{\frac{k_r}{r+k_r}}\sum_{\tau\in \Omega_{n-n_i}} \Big(p_{\tau_{1}w_{n_i}}\frac{p_{\tau}}{p_{w_{n_i}}}s_{w_{n_i}\tau_1}^rs_{\tau}^r\Big)^{\frac{k_r}{r+k_r}}\\&\geq \sum_{i=1}^{K}\sum_{w\in\Gamma_{n_i}}(p_ws_{w}^r)^{\frac{k_r}{r+k_r}}(P_{\min}p_{\max}^{-1}s_{\min}^{r})^{\frac{k_r}{r+k_r}}\sum_{\tau\in \Omega_{n-n_i}} (p_{\tau}s_{\tau}^r)^{\frac{k_r}{r+k_r}}.
		\end{align*}
		Again, using the previous lemma, we get
		$$ F_{r}^{\frac{k_r}{r+k_r}}\geq \sum_{i=1}^{K}\sum_{w\in\Gamma_{n_i}}(p_ws_{w}^r)^{\frac{k_r}{r+k_r}}(P_{\min}p_{\max}^{-1}s_{\min}^{r})^{\frac{k_r}{r+k_r}}F_{r}^{\frac{-k_r}{r+k_r}}.$$
		This implies that
		$$\sum_{w\in \Gamma}(p_ws_{w}^r)^{\frac{k_r}{r+k_r}}\leq \eta_r,$$
		where $\eta_r=\max\Big\{1, F_{r}^{\frac{2k_r}{r+k_r}}(P_{\min}p_{\max}^{-1}s_{\min}^{r})^{\frac{-k_r}{r+k_r}}\Big\}.$ Similarly, one can get the other inequality.
		Thus, the proof is complete.
	\end{proof}
	\begin{lemma}
		Let $r\in (0,\infty)$ and let $\mu$ be the Borel probability measure supported on the limit set $E$ of the RIFS $\mathcal{I}$. Let $\Gamma\subset \Omega^*$ be a finite maximal antichain. Then,  for each $n\in\mathbb{N}$ with $n\geq |\Gamma|$, we have  $$V_{n,r}(\mu)\leq \inf \Big\{\frac{1}{p_{\min}}\sum_{w\in \Gamma}p_w s_{w}^rV_{n_w,r}(\mu):~~n_w\in \mathbb{N}, \sum\limits_{w\in \Gamma}n_w\leq n\Big\}.$$ \end{lemma}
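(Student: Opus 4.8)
The plan is to prove this as the recurrent analogue of the superadditivity-type estimate for quantization errors from the self-similar and similarity-recurrent settings (cf. \cite{GL2,R9}). Everything is driven by the self-referential identity for $\mu$, now read along the antichain $\Gamma$: I will glue together quantizers for the pieces of $\mu$ indexed by the words of $\Gamma$ and control the resulting error through the Lipschitz bounds for the maps $f_w$. So the argument falls into two parts: (i) upgrading the one-generation relation $\mu=\sum_{w\in\Omega_n}p_w\,\mu\circ f_w^{-1}$ to the antichain relation $\mu=\sum_{w\in\Gamma}p_w\,\mu\circ f_w^{-1}$, and (ii) the scaling computation.

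For part (i), I would set $m=\max_{w\in\Gamma}|w|$, invoke the stated invariance at level $m$, namely $\mu=\sum_{v\in\Omega_m}p_v\,\mu\circ f_v^{-1}$, and then regroup the words $v\in\Omega_m$ according to their unique prefix in $\Gamma$ (since $\Gamma$ is a finite maximal antichain, every sufficiently long admissible word extends exactly one element of $\Gamma$). Because the weights $p_v$ factor multiplicatively along concatenations and $f_v$ factors through $f_w$ whenever $w$ is a prefix of $v$, the partial sum over all $v$ with a fixed prefix $w\in\Gamma$ collapses back to $p_w\,\mu\circ f_w^{-1}$, and summing over $w\in\Gamma$ yields the antichain invariance. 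Carrying out this regrouping carefully, while keeping track of admissibility of words and of the exact weights in the recurrent bookkeeping, is the step I expect to demand the most care; the remainder uses no new estimate.

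For part (ii), fix positive integers $(n_w)_{w\in\Gamma}$ with $\sum_{w\in\Gamma}n_w\le n$ — possible precisely because $n\ge|\Gamma|$, since each $n_w$ is a positive integer. For each $w\in\Gamma$ choose an $n_w$-optimal set $\alpha_w$ for $\mu$ of order $r$ (such sets exist since $\mu$ is compactly supported), so that $\int\rho(y,\alpha_w)^r\,d\mu(y)=V_{n_w,r}(\mu)$, and put $\alpha=\bigcup_{w\in\Gamma}f_w(\alpha_w)$, which has $\text{card}(\alpha)\le\sum_{w\in\Gamma}n_w\le n$. Then by the antichain invariance together with the change of variables $\int g\,d(\mu\circ f_w^{-1})=\int (g\circ f_w)\,d\mu$,
\[ V_{n,r}(\mu)\le\int\rho(x,\alpha)^r\,d\mu(x)=\sum_{w\in\Gamma}p_w\int\rho\big(f_w(y),\alpha\big)^r\,d\mu(y). \]
Since $f_w(\alpha_w)\subseteq\alpha$ and $f_w=f_{w_1w_2}\circ\cdots\circ f_{w_{|w|-1}w_{|w|}}$ is $s_w$-Lipschitz, picking for each $y$ a nearest point $a\in\alpha_w$ to $y$ gives $\rho\big(f_w(y),\alpha\big)\le\rho\big(f_w(y),f_w(a)\big)\le s_w\,\rho(y,\alpha_w)$, whence
\[ V_{n,r}(\mu)\le\sum_{w\in\Gamma}p_w s_w^r\int\rho(y,\alpha_w)^r\,d\mu(y)=\sum_{w\in\Gamma}p_w s_w^r\,V_{n_w,r}(\mu). \]
Taking the infimum over all admissible tuples $(n_w)_{w\in\Gamma}$ gives the stated inequality.
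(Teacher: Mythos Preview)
Your proposal is correct and follows essentially the same route as the paper: assert the antichain invariance $\mu=\sum_{w\in\Gamma}p_w\,\mu\circ f_w^{-1}$, then push forward optimal quantizers through the $f_w$ and use the $s_w$-Lipschitz bound to obtain the estimate. The only difference is that you sketch a justification for the antichain invariance (regrouping the level-$m$ identity by prefixes in $\Gamma$), whereas the paper simply states it; your caveat about the recurrent bookkeeping is well placed, since in this setting $p_v$ does not split as $p_w p_\tau$ but rather as $p_w\cdot p_{w_k\tau_1}p_{\tau_1\tau_2}\cdots$, and the collapse relies on the Markov structure of $\nu$ rather than pure multiplicativity.
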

	\begin{proof}
		Since $\Gamma\in \Omega^*$ is a finite maximal antichain, we have $\mu\leq \frac{1}{p_{\min}}\sum_{w\in \Gamma}p_w\mu\circ f_w^{-1}$. Let $A_{n_w}$ be an $n_w$-optimal set of $V_{n_w,r}(\mu)$
		for each $w\in \Gamma.$
		Since $f_w$ is a bi-Lipschitz map, we have $|f_w(A_{n_w})|= n_w $ and $|\cup_{w\in \Gamma}f_w(A_{n_w})|\leq n.$ Then, we have
		\begin{align*}
			V_{n,r}(\mu)&\leq \int d \Big(x,\bigcup_{w\in\Gamma}f_w(A_{n_w})\Big)^r d{\mu}(x)\\&\leq \frac{1}{p_{\min}}
			\sum_{w\in \Gamma}p_w
			\int d \Big(x,\bigcup_{w\in\Gamma}f_w(A_{n_w})\Big)^r d{(\mu\circ f_{w}^{-1})}(x)\\&= \frac{1}{p_{\min}}\sum_{w\in \Gamma}p_w
			\int d \Big(f_{w}(x),\bigcup_{w\in\Gamma}f_w(A_{n_w})\Big)^r d{\mu}(x)\\&\leq \frac{1}{p_{\min}}
			\sum_{w\in \Gamma}p_w
			\int d \Big(f_{w}(x),f_w(A_{n_w})\Big)^r d{\mu}(x)\\&\leq \frac{1}{p_{\min}}
			\sum_{w\in \Gamma}p_w s_{w}^r
			\int d (x,A_{n_w})^r d{\mu}(x)\\&= \frac{1}{p_{\min}}\sum_{w\in \Gamma}p_w s_{w}^r V_{n_w,r}(\mu).
		\end{align*}
		This completes the proof.
	\end{proof}
	In the following proposition, we prove that the upper quantization coefficient for the Borel probability measure supported on the limit set of the RIFS $\mathcal{I}$ is finite without assuming any separation condition.
	\begin{prop}\label{prop3.7}
		Let $\mathcal{I}=\{X; f_{ij}, p_{ij}: 1\leq i,j\leq N\}$ be a bi-Lipschitz recurrent  IFS. Let $\mu$ be the Borel probability measure supported on the limit set $E$ of the RIFS $\mathcal{I}$. Let $r\in (0,\infty)$ and let $k_r\in (0,\infty)$ be a unique number such that $\Phi_r(\frac{k_r}{r+k_r})=1.$ Then,
		$$\limsup_{n\to \infty}n e_{n,r}^{k_r}(\mu)<\infty.$$
	\end{prop}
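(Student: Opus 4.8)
The plan is to establish an upper bound for $V_{n,r}(\mu)$ by constructing, for each large $n$, an explicit finite maximal antichain together with a carefully chosen allocation of codebook points, and then apply the recursive estimate
$$V_{n,r}(\mu)\leq\inf\Big\{\sum_{w\in\Gamma}p_ws_w^rV_{n_w,r}(\mu):n_w\in\mathbb{N},\ \sum_{w\in\Gamma}n_w\leq n\Big\}$$
from the lemma immediately preceding the proposition. The standard device (going back to Graf–Luschgy and used by Roychowdhury) is to fix a small parameter $\varepsilon>0$ and let $\Gamma=\Gamma(\varepsilon)$ be the antichain of all $w\in\Omega^*$ with $(p_ws_w^r)^{1/(r+k_r)}\leq\varepsilon<(p_{w^-}s_{w^-}^r)^{1/(r+k_r)}$; since each $s_{ij}<1$ and $p_{ij}\le 1$, this $\Gamma$ is finite and maximal. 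Write $t_r=\frac{k_r}{r+k_r}$, so $k_r=\frac{rt_r}{1-t_r}$ and hence $(p_ws_w^r)^{t_r}$ is, up to the cutoff, comparable to $\big((p_ws_w^r)^{1/(r+k_r)}\big)^{k_r}\asymp\varepsilon^{k_r}$ on $\Gamma$.

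Next I would allocate the budget: for $w\in\Gamma$ put $n_w=\big\lceil (p_ws_w^r)^{t_r}/\sum_{v\in\Gamma}(p_vs_v^r)^{t_r}\cdot n\big\rceil$, or more simply $n_w\asymp n\,\varepsilon^{k_r}$ by the comparability above, so that $\sum_{w\in\Gamma}n_w\leq n$ once $\varepsilon$ is chosen appropriately in terms of $n$ (this forces $|\Gamma|\asymp\varepsilon^{-k_r}$, which is exactly the count controlled by Lemma~\ref{bb}/\ref{lemboundfinite}, since $\sum_{w\in\Gamma}(p_ws_w^r)^{t_r}\leq\eta_r$ gives $|\Gamma|\,\varepsilon^{k_r}\lesssim\eta_r$). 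Then by the recursive inequality and Lemma~\ref{le2.9} (each $V_{n_w,r}(\mu)\le V_{1,r}(\mu)=:D<\infty$, as $E$ is compact),
$$V_{n,r}(\mu)\leq\sum_{w\in\Gamma}p_ws_w^rV_{n_w,r}(\mu)\leq D\sum_{w\in\Gamma}p_ws_w^r.$$
The point is now to convert $\sum_{w\in\Gamma}p_ws_w^r$ into a power of $\varepsilon$: since on $\Gamma$ one has $p_ws_w^r\leq\varepsilon^{r+k_r}$ and $|\Gamma|\lesssim\varepsilon^{-k_r}$, we get $\sum_{w\in\Gamma}p_ws_w^r\lesssim\varepsilon^{r}$. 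Combined with $\varepsilon^{k_r}\asymp 1/n$ this yields $V_{n,r}(\mu)\lesssim n^{-r/k_r}$, i.e. $e_{n,r}^{k_r}(\mu)\lesssim 1/n$, which is precisely $\limsup_n n\,e_{n,r}^{k_r}(\mu)<\infty$.

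The main obstacle is making the allocation $n\mapsto(\varepsilon,\{n_w\})$ genuinely work for \emph{every} $n$, not just along a subsequence: one must choose $\varepsilon=\varepsilon(n)$ so that $\sum_{w\in\Gamma(\varepsilon)}n_w\leq n$ while $\min_w n_w\ge 1$, and the bi-Lipschitz (non-similarity) nature of the maps means $p_ws_w^r$ is only \emph{comparable} to the clean multiplicative quantity, with the distortion constants $A_r,\tilde A_r$ (hence $F_r$) from \eqref{2.3} entering. This is exactly where Lemma~\ref{bb} and Lemma~\ref{lemboundfinite} are needed: they guarantee $\sum_{w\in\Gamma}(p_ws_w^r)^{t_r}$ is bounded above and below by constants depending only on $F_r$, uniformly over all finite maximal antichains, so the count $|\Gamma(\varepsilon)|$ and the sums above stay controlled independently of $n$. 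Once those uniform bounds are in hand, the interpolation over $n$ (choosing $\varepsilon(n)$ and rounding $n_w$) is routine, and no separation condition is used anywhere, as claimed.
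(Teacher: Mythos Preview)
Your approach is essentially identical to the paper's: both construct the finite maximal antichain $\Gamma_\varepsilon$ via an $\varepsilon$-cutoff on $(p_ws_w^r)^{k_r/(r+k_r)}$, use Lemma~\ref{lemboundfinite} to bound $|\Gamma_\varepsilon|$, allocate a \emph{constant} number of points to each $w\in\Gamma_\varepsilon$ (the paper fixes an arbitrary $n_0$, you effectively take $n_0=1$ once you drop the proportional allocation and bound $V_{n_w,r}\le V_{1,r}$), and apply the recursive inequality from the preceding lemma. The only cosmetic difference is that the paper obtains $\sum_{w\in\Gamma}p_ws_w^r\le\eta_r\,\epsilon^{r/k_r}$ by the splitting $p_ws_w^r=(p_ws_w^r)^{t_r}(p_ws_w^r)^{r/(r+k_r)}$ together with $\sum_w(p_ws_w^r)^{t_r}\le\eta_r$, rather than via the cruder product $|\Gamma|\cdot\varepsilon^{r+k_r}$ you use; both yield the same bound.
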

	\begin{proof}
		Let $\xi_{\min}=\min\{(p_{j}p_{ji}s_{ij}^r)^{\frac{k_r}{r+k_r}}: 1\leq i,j\leq N\}$. Clearly $\xi_{\min}\in (0,1).$ Let $n_0\in \mathbb{N}$ be a fixed number and $\eta_r\geq 1$ be as in Lemma \ref{lemboundfinite}. Take any $n\in \mathbb{N}$ such that $\frac{n_{0}\eta_r}{n}\leq \xi_{\min}^2.$ This holds for all but finitely many  values of $n.$ Let $\epsilon=\frac{n_{0}\eta_r}{n}\xi_{\min}^{-1}.$ Clearly $\epsilon\in (0,1).$ Now, we define a subset $\Gamma_{\epsilon}$ of $\Omega^*$ as follows
		$$\Gamma_{\epsilon}=\bigg\{w=(w_1,w_2,\ldots,w_n)\in \Omega^*: \bigg(\frac{p_{w^-}}{p_{w_{n-1}}}s_{w^-}^r\bigg)^{\frac{k_r}{r+k_r}}\geq \epsilon > \bigg(\frac{p_{w}}{p_{w_n}}s_{w}^r\bigg)^{\frac{k_r}{r+k_r}}\bigg\}.$$
		Then,  $\Gamma_{\epsilon}$ is a finite maximal antichain. Using Lemma \ref{lemboundfinite}, we have
		\begin{align*}
			\eta_r\geq \sum_{w\in \Gamma_{\epsilon}}(p_ws_{w}^r)^{\frac{k_r}{r+k_r}}&=  \sum_{w=(w_1,w_2,\ldots,w_n)\in \Gamma_{\epsilon}}\bigg(\frac{p_{w^-}}{p_{w_{n-1}}}s_{w^-}^r p_{w_n}p_{w_{n}w_{n-1}}s_{w_{n-1}w_{n}}^{r}\bigg)^{\frac{k_r}{r+k_r}}\\&\geq  \sum_{w=(w_1,w_2,\ldots,w_n)\in \Gamma_{\epsilon}}\bigg(\frac{p_{w^-}}{p_{w_{n-1}}}s_{w^-}^r\bigg)^{\frac{k_r}{r+k_r}}\xi_{\min}\\&\geq |\Gamma_{\epsilon}|\xi_{\min}\epsilon,
		\end{align*}
		which implies that $|\Gamma_{\epsilon}|\leq \frac{n}{n_0}$. Thus, we have  $\sum_{w\in \Gamma_{\epsilon}}n_0\leq n$. Therefore, by the above lemma, we get
		\begin{align*}
			V_{n,r}(\mu)&\leq \frac{1}{p_{\min}} \sum_{w\in \Gamma_{\epsilon}}p_w s_{w}^r V_{n_0,r}(\mu)\\&= \frac{1}{p_{\min}}\sum_{w\in \Gamma_{\epsilon}}(p_w s_{w}^r)^{\frac{k_r}{r+k_r}}(p_w s_{w}^r)^{\frac{r}{r+k_r}} V_{n_0,r}(\mu)\\&\leq \frac{1}{p_{\min}} \sum_{w\in \Gamma_{\epsilon}}(p_w s_{w}^r)^{\frac{k_r}{r+k_r}}{\epsilon}^{\frac{r}{k_r}} V_{n_0,r}(\mu).
		\end{align*}
		Using Lemma  \ref{lemboundfinite} and $\epsilon=\frac{n_{0}\eta_r}{n}\xi_{\min}^{-1}$, we obtain
		$$V_{n,r}(\mu)\leq \frac{1}{p_{\min}}\eta_r \Big({\frac{n_{0}\eta_r}{n}\xi_{\min}^{-1}}\Big)^{\frac{r}{k_r}} V_{n_0,r}(\mu),$$
		which implies that
		$$n e_{n,r}^{k_r}(\mu)\leq \bigg(\frac{1}{p_{\min}}\bigg)^{\frac{k_r}{r}} \eta_{r}^{(\frac{k_r}{r}+1)}n_0\xi_{\min}^{-1}e_{n_0,r}^{k_r}(\mu).$$
		The above inequality holds for all but finitely many values of $n.$ Thus, we have
		$$\limsup_{n\to \infty}n e_{n,r}^{k_r}(\mu)<\infty,$$
		which completes the proof of the proposition.
	\end{proof}
	In the end, we will use the above proposition to determine the upper bound of the quantization dimension. Now, we give some lemmas and propositions which will be used to determine the lower bound of the quantization dimension.
	\begin{lemma}
		Let $\mathcal{I}=\{X; f_{ij}, p_{ij}: 1\leq i,j\leq N\}$ be a bi-Lipschitz recurrent  IFS. Let $\mu$ be the Borel probability measure supported on the limit set $E.$ Then, for each $\epsilon>0$
		$$\inf\{\mu(B(x,\epsilon)): x\in E\}>0.$$
	\end{lemma}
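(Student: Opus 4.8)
The plan is to show that $\mu$ assigns positive mass to every ball of radius $\epsilon>0$ centered at a point of its support $E$, which is a standard compactness argument. First I would fix $\epsilon>0$ and suppose, for contradiction, that $\inf\{\mu(B(x,\epsilon)):x\in E\}=0$. Then there is a sequence $x_m\in E$ with $\mu(B(x_m,\epsilon))\to 0$. Since $E$ is compact, after passing to a subsequence we may assume $x_m\to x_0\in E$. For $m$ large enough we have $d(x_m,x_0)<\epsilon/2$, so $B(x_0,\epsilon/2)\subseteq B(x_m,\epsilon)$, and hence $\mu(B(x_0,\epsilon/2))\le \mu(B(x_m,\epsilon))\to 0$, forcing $\mu(B(x_0,\epsilon/2))=0$.

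The contradiction will come from the fact that $x_0$ lies in the support of $\mu$. Since $\mu=\nu\circ\pi^{-1}$ and $\pi$ is continuous and onto $E$, there is $w\in\Omega$ with $\pi(w)=x_0$. Because $\pi$ is continuous, the preimage $\pi^{-1}(B(x_0,\epsilon/2))$ is an open subset of $\Omega$ containing $w$, so it contains a cylinder $[w|_n]$ for some $n$ large enough. Then
\[
\mu(B(x_0,\epsilon/2))=\nu(\pi^{-1}(B(x_0,\epsilon/2)))\ge \nu([w|_n])=p_{w_1}p_{w_1w_2}\cdots p_{w_{n-1}w_n}>0,
\]
where positivity uses that each $p_{w_iw_{i+1}}>0$ (as $w\in\Omega$) and that $p_{\min}>0$ by irreducibility of $P$. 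This contradicts $\mu(B(x_0,\epsilon/2))=0$, completing the proof.

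The only mild subtlety is justifying that an open set in $\Omega$ containing $w$ contains a full cylinder $[w|_n]$; this is immediate from the description of the metric $\Pi$, since the ball of radius $2^{-n}$ around $w$ in $(\Omega,\Pi)$ is exactly the cylinder $[w|_n]$. I do not expect any real obstacle here — the argument is purely topological together with the explicit formula for $\nu$ on cylinders, and every ingredient (compactness of $E$, continuity and surjectivity of $\pi$, $\mu=\nu\circ\pi^{-1}$, and $\nu([w|_n])=p_{w_1}p_{w_1w_2}\cdots p_{w_{n-1}w_n}$) has already been established in Section~\ref{se2}.
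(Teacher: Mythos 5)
Your proof is correct, but it takes a genuinely different route from the paper's. You argue by contradiction and compactness: if the infimum were zero, a convergent subsequence would produce $x_0\in E$ with $\mu(B(x_0,\epsilon/2))=0$, contradicting the fact that $x_0=\pi(w)$ lies in the support of $\mu$, since $\pi^{-1}(B(x_0,\epsilon/2))$ is open, hence contains the cylinder $[w|_n]$ (the open ball of radius $2^{-n}$ about $w$ in $(\Omega,\Pi)$), whose $\nu$-measure $p_{w_1}p_{w_1w_2}\cdots p_{w_{n-1}w_n}$ is positive because $w\in\Omega$ forces $p_{w_iw_{i+1}}>0$ and irreducibility of $P$ forces the stationary vector to be strictly positive. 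All of these ingredients are established in the preliminaries, so there is no gap. The paper instead argues directly and quantitatively: it picks $t_0$ with $\sum_{p_{ij}>0}s_{ij}^{t_0}=1$, forms the finite maximal antichain $\Gamma_\epsilon=\{w\in\Omega^*: s_{w^-}^{t_0}\ge (\epsilon/\operatorname{diam}(E))^{t_0}>s_w^{t_0}\}$ so that every $x\in E$ lies in some $E_{w_0}$ with $\operatorname{diam}(E_{w_0})<\epsilon$, and then bounds $\mu(B(x,\epsilon))\ge\mu(E_{w_0})\ge\min_{w\in\Gamma_\epsilon}\{p_w\}\cdot\min_{1\le i\le N}\{\mu_i(E_i)\}$, a bound that is uniform in $x$ because $\Gamma_\epsilon$ is finite. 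The trade-off is clear: the paper's construction yields an explicit uniform lower bound in the same antichain framework used throughout the rest of the argument, whereas yours is shorter and purely topological (compactness of $E$ plus $E=\operatorname{supp}\mu$) but non-constructive.
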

	\begin{proof}
		Since $t\to \sum\limits_{i,j=1,p_{ji}>0}^{N}s_{ij}^t$  is a strictly decreasing continuous function for $t\in [0,\infty)$, there exists a unique $t_o\in (0,\infty)$ such that $\sum\limits_{i,j=1,p_{ji}>0}^{N}s_{ij}^{t_0}=1.$ Let $\xi_{0}=(\frac{\epsilon}{\text{diam}(E)})^{t_0}.$ We define a finite maximal antichain $\Gamma_{\epsilon}$ as follows
		$$\Gamma_{\epsilon}=\{w\in \Omega^*: s_{w^-}^{t_0}\geq \xi_{0}>s_{w}^{t_0}\}.$$
		Since $E=\cup_{i=1}^{N}E_i$, we have $E=\cup_{w\in \Gamma_{\epsilon}}E_w.$ Let $x\in E$. Then, there exists a $w_{0}=(w_1,w_2,\ldots,w_n)\in \Gamma_{\epsilon}$ such that $x\in E_{w_0}=f_{w_0}(E_{w_n}).$ Let $x_1,x_2\in E_{w_n}.$ Then, we have
		\begin{align*}
			d(f_{w_0}(x_1),f_{w_0}(x_2))\leq s_{w_0}\text{diam}(E)< \epsilon.
		\end{align*}
		This implies that, $\text{diam}(E_{w_0})< \epsilon $. Since $x\in E_{w_0},$ we have $E_{w_0}\subseteq B(x,\epsilon).$ Since $\mu=\nu\circ \pi^{-1},$ we have
		\begin{align*}
			\mu{(E_{w_0})}= \nu\circ \pi^{-1}(E_{w_0})\geq \nu([w_1,w_2,\ldots,w_n])= p_{w_0}\geq \min_{w\in \Gamma_{\epsilon}} \{p_w\} .
		\end{align*}
		This implies that $\mu(B(x,\epsilon))\geq \min_{w\in \Gamma_{\epsilon}} \{p_w\}$. Since this holds for all $x\in E$, we have $\inf\{\mu(B(x,\epsilon)): x\in E\}>0.$ This completes the proof.
	\end{proof}
	\begin{prop}\label{SSC}
		Let $\mathcal{I}=\{X; f_{ij}, p_{ij}: 1\leq i,j\leq N\}$ be a bi-Lipschitz recurrent IFS satisfying the strong open set condition. Then, for each  $i\in \{1,2,\ldots,N\}$, there exists a sequence ${\sigma}^{i}=(i,\ldots,i)\in \Omega^*$ such that for each $m\in \mathbb{N}$, the RIFS
		$\mathcal{I}^m:=\{X; g_{w}, p_{ij}: 1\leq i,j\leq N, w\in
		\Omega_{m} \}$ satisfying the strong separation condition, where  $g_{w}:=f_{w} \circ f_{{\sigma}^{i}}$ for $w=(w_1, w_2,\ldots,w_{m_1},i)\in \Omega_m$.  Let $\mu_m$ be the Borel probability measure supported on the limit set $F^{m}$ of the RIFS $\mathcal{I}^m.$ Then, there exists $n_0\in \mathbb{N}$ such that for all $n\geq n_0$ there exist $n_w\in \mathbb{N}$ such that $\sum_{w\in \Omega_m}n_w\leq n$ and
		$$V_{n,r}(\mu_m)\geq \sum_{w\in \Omega_m}p_w C_{w}^{r}V_{n_w,r}(\mu_m),$$
		where $C_w:=c_{w}c_{{\sigma}^{i}}\in (0,1)$ for $w=(w_1, w_2,\ldots,w_{m_1},i)\in \Omega_m.$
	\end{prop}
	\begin{proof}
		Since the RIFS $\mathcal{I}$ satisfies the strong open set condition, there exist nonempty bounded open sets $U_1,U_2,\ldots, U_N$ such that
		$$f_{ij}(U_j)\cap f_{kl}(U_l)=\emptyset~~\text{for}~~(i,j)\ne (k,l),~~f_{ij}(U_j)\subseteq U_i~~\text{and}~~E_i\cap U_i\ne \emptyset~~\text{where}~~1\leq i,j,k,l\leq N.$$
		This also implies that for any
		$w=(w_1,w_2,\dots,w_m)$ and $\tau=(\tau_1,\tau_2,\ldots,\tau_m)$ in $\Omega_m$ such that $w\ne \tau$, we
		have $f_w(U_{w_m})\cap f_\tau(U_{\tau_m})=\emptyset.$
		\par
		For each $i\in \{1,2,\ldots,N\}$, given that $E_i\cap U_i\ne \emptyset,$ there exists a sequence $\sigma=(i,\sigma_2,\ldots,\sigma_n)\in \Omega^*$ such that $E_{\sigma}=f_{\sigma}(E_{\sigma_n})\subseteq U_i.$ Since $P$ is an irreducible matrix, for $i,\sigma_n$, there exist $j_1,j_2,\ldots,j_k\in \{1,2,\ldots,N\}$ such that $p_{ij_1}p_{j_1j_2}\dots p_{j_k\sigma_n}>0$. From this, we construct a finite sequence which starts from $\sigma_n$ and ends at $i,$ say $\sigma*=(\sigma_n,j_{k},j_{k-1},\ldots,j_{2},j_{1},i)\in \Omega^*.$ Now, we define a finite sequence $\sigma^i=(i,\sigma_2,\ldots,\sigma_n,j_{k},j_{k-1},\ldots,j_{2},j_{1},i).$ Clearly, $\sigma^i\in \Omega^*$ and $E_{\sigma^i}=f_{\sigma^i}(E_i)=f_{i\sigma_2}\circ f_{\sigma_2\sigma_3}\circ\dots \circ f_{\sigma_{n-1}\sigma_n}\circ f_{\sigma_n{j_k}}\circ\dots \circ f_{j_{1}i}(E_i)\subseteq U_i.$
		\par
		For each $m\in \mathbb{N},$ we define an RIFS  $\mathcal{I}^m=\{X; g_{w}, p_{ij}: 1\leq i,j\leq N, w\in
		\Omega_{m} \}$, where $g_w$ is defined as $g_w=f_w\circ f_{\sigma^i}$ for $w=(w_1,w_2,\dots,w_{m-1},i)\in \Omega_m.$ For $x,y\in X$, we have
		$$C_w d(x,y)\leq d(g_w(x),g_w(y))\leq S_w d(x,y),$$
		where $C_w=c_wc_{\sigma^i}, S_w=s_ws_{\sigma^i}$ and $w=(w_1,w_2,\dots,w_{m-1},i)\in \Omega_m.$ Now, we claim that the RIFS $\mathcal{I}^m$ satisfies the strong separation condition.
		Let $F^m$ be the limit set of the RIFS $\mathcal{I}^m$. Then,
		$$F^m=\bigcup_{i=1}^{N}F_{i}^{m}\quad \text{and}\quad F_{i}^{m}=\bigcup_{w\in \Omega_m, w=(i,w_2,\dots,w_m)}g_{w}(F_{w_m}^{m}).$$
		By observing the code spaces of $\mathcal{I}^m$ and $\mathcal{I}$, it is clear that the code space of $\mathcal{I}^m$ is contained in $\Omega.$ Then, we have $F^m\subseteq E$ and $F_{i}^{m}\subseteq E_i$ for each $i\in \{1,2,\dots,N\}.$ Let $w=(w_1,w_2,\dots,w_m)\ne \tau=(\tau_1,\tau_2,\dots,\tau_m)\in \Omega_m.$ Then, we have
		\begin{align*}
			g_w(F_{w_m}^m)=f_w\circ f_{\sigma^{w_m}}(F_{w_m}^m)\subseteq f_w\circ f_{\sigma^{w_m}}(E_{w_m})\subseteq f_w (U_{w_m})
		\end{align*}
		and
		\begin{align*}
			g_\tau(F_{\tau_m}^m)=f_\tau\circ f_{\sigma^{\tau_m}}(F_{\tau_m}^m)\subseteq f_\tau\circ f_{\sigma^{\tau_m}}(E_{\tau_m})\subseteq f_\tau (U_{\tau_m}).
		\end{align*}
		Since $w\ne \tau\in \Omega_m$, we obtain $ g_w(F_{w_m}^m)\cap g_\tau(F_{\tau_m}^m)=\emptyset.$ This implies that the RIFS $\mathcal{I}^m$ satisfies the strong separation condition. Thus, the claim is true.  We have
		\begin{align*}
			F^m&=\bigcup_{i=1}^{N}\bigcup_{w\in \Omega_m, w=(i,w_2,\dots,w_m)}g_{w}(F_{w_m}^{m})\\&=\bigcup_{i=1}^{N}\bigcup_{w\in \Omega_{m},\sigma\in \Omega_{m}, w=(i,w_2,\dots,w_m),\sigma=(w_m,\sigma_2,\sigma_3,\ldots,\sigma_m)}g_{w}\circ g_{\sigma}(F_{w_\sigma}^{m}).
		\end{align*} 
		Since the RIFS $\mathcal{I}^m$ satisfies the SSC, we can define a Borel probability measure $\mu_m$ (like a mass distribution) supported on the limit set $F^m$ as follows:\\
		At the $1$st  level of $F^{m}$,  $\{g_{w}(F_{w_m}^{m})\}_{w=(w_1,w_2,\dots,w_m)\in \Omega_{m}}~~ \text{are disjoint subsets of}~~ F^{m}$ and we define 
		$$\mu_m(g_{w}(F_{w_m}^{m})=P_w~~\forall~~  w=(w_1,w_2,\dots,w_m)\in \Omega_{m}.$$
		At the $2$nd level of $F^{m}$, $\{g_{w}\circ g_{\sigma}(F_{\sigma_m}^{m})\}_{w\in \Omega_{m},\sigma\in \Omega_{m}, w=(i,w_2,\dots,w_m),\sigma=(w_m,\sigma_2,\sigma_3,\ldots,\sigma_m)}$ are disjoint subsets of $F^{m}$ and we define$$\mu_m(g_{w}\circ g_{\sigma}(F_{\sigma_m}^{m})=P_{w *\sigma}~~\forall~~w=(w_1,w_2,\dots,w_m)\in \Omega_m,\sigma=(w_m,\sigma_2,\sigma_3,\ldots,\sigma_m)\in \Omega_m,$$
		where $w*\sigma:=(w_1,w_2,\dots,w_m,\sigma_2,\sigma_3,\ldots,\sigma_m)$. Similarly, we define $\mu_m$ on each level of $F^m$.
		One can easily check that  the measure $\mu_m$ satisfies the following relation
		$$\mu_m\geq \sum_{w\in\Omega_m}p_w\mu_m\circ g_{w}^{-1}.$$
		\par
		Set $\epsilon_0=\min_{w\ne \tau \in \Omega_m}\{d(F_{w}^m,F_{\tau}^m)\}$. Since the RIFS $\mathcal{I}^m$ satisfies the strong separation condition, we have $\epsilon_0>0.$ Let
		$$\delta=\inf\Big\{\Big(\frac{\epsilon_0}{8}\Big)^r \mu_m\Big(B\Big(x,\frac{\epsilon_0}{8}\Big)\Big): x\in F^m \Big\}.$$
		By using the previous lemma, we obtain that $\delta>0.$ Lemma
		\ref{le2.9} yields that $\lim_{n\to \infty}V_{n,r}(\mu_m)=0.$  Thus, there exists an $n_0\in \mathbb{N}$ such that
		$V_{n,r}(\mu)<\delta \te{ for all } n\geq n_0.$ Let $A_n$ be an $n$-optimal set for probability measure $\mu_m$ of order $r$. Then, by Lemma~\ref{le2.11}, we get
		$$\Big(\frac{\|A_n\|_{\infty}}{2}\Big)^r\min\limits_{x\in F^{m}}\mu_m\Big(B\Big(x,\frac{\|A_n\|_{\infty}}{2}\Big)\Big)<\delta .$$
		Since $t\to t^r\min_{x\in F^{m}}\mu_m(B(x,t))$ is an increasing function for $t\in (0,\infty)$, we have $$\|A_n\|_{\infty}<\frac{\epsilon_0}{4} \te{ for all } n\geq n_0.$$
		We define a finite set $A_{n_w}=\{a\in A_n: W(a|A_n)\cap F_{w}^m\ne \emptyset\}$ for $w\in\Omega_m$  and let $n_w$ be the cardinality of $A_{n_w}.$ Since $\|A_n\|_{\infty}<\frac{\epsilon_0}{4} \te{ for all } n\geq n_0$, we obtain that for all $n\geq n_0,$ $A_{n_w}\cap A_{n_\tau}=\emptyset$ for $w\ne \tau\in \Omega_m$ and $\sum_{w\in \Omega_m}n_w\leq n.$   Thus, for $n\geq n_0,$
		\begin{align*}
			V_{n,r}(\mu_m)&=\int d \Big(x,A_{n}\Big)^r d{\mu_m}(x)\\&\geq \sum_{w\in\Omega_m}p_w \int d \Big(g_w(x),A_{n}\Big)^r d{\mu_m}(x)\\&= \sum_{w\in\Omega_m}p_w \int d \Big(g_w(x),A_{n_w}\Big)^r d{\mu_m}(x)\\&\geq \sum_{w\in\Omega_m}p_w C_{w}^{r}\int d \Big(x,g_{w}^{-1}(A_{n_w})\Big)^r d{\mu_m}(x)\\& \geq \sum_{w\in\Omega_m}p_w C_{w}^{r}V_{n_w,r}(\mu_m).
		\end{align*}
		Thus, the proof is complete.
	\end{proof}
	\begin{prop}
		Let $\mathcal{I}^m$ be an RIFS defined as in the previous proposition. Let $r\in (0,\infty).$ Then,
		$$\liminf_{n\to \infty}n e_{n,r}^{l}(\mu_m)>0 \te{ for all } l\in (0,l_{m,r}),$$
		where $l_{m,r}\in (0,\infty)$ is uniquely determined by $\sum_{w\in\Omega_m}(p_w C_{w}^{r})^{\frac{l_{m,r}}{r+l_{m,r}}}=1$. Moreover, $l_{m,r}\leq \underline{D}_r(\mu_m).$
	\end{prop}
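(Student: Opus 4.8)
The plan is to run the classical Graf--Luschgy lower-bound induction, now powered by the recursive estimate of Proposition~\ref{SSC}. Fix $l\in(0,l_{m,r})$ and put $t=\frac{l}{r+l}$, $t_{m,r}=\frac{l_{m,r}}{r+l_{m,r}}$. Since $s\mapsto\frac{s}{r+s}$ is increasing, $t<t_{m,r}$; and since each $p_wC_w^r\in(0,1)$ while $\sum_{w\in\Omega_m}p_w=1$, the map $s\mapsto\sum_{w\in\Omega_m}(p_wC_w^r)^s$ is continuous, strictly decreasing, equal to $|\Omega_m|\ge N\ge 2$ at $s=0$ and $<1$ at $s=1$; this gives the existence and uniqueness of $l_{m,r}$ (exactly as in Lemma~\ref{unique}) and, crucially, the strict inequality
\[
d:=\sum_{w\in\Omega_m}(p_wC_w^r)^{t}\ >\ \sum_{w\in\Omega_m}(p_wC_w^r)^{t_{m,r}}=1 .
\]
The number $d>1$ is the engine of the whole argument.

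The heart of the proof is the claim that there is a constant $Q>0$ with $V_{n,r}(\mu_m)\ge Q\,n^{-r/l}$ for every $n\in\mathbb{N}$, which I would prove by strong induction on $n$. Let $n_0$ be the threshold supplied by Proposition~\ref{SSC}. For the finitely many $n$ with $1\le n\le n_0$ we simply choose $Q\le\min_{1\le n\le n_0}n^{r/l}V_{n,r}(\mu_m)$; this is legitimate because $V_{n,r}(\mu_m)>0$ for all $n$, the limit set $F^m$ being infinite. For $n>n_0$, Proposition~\ref{SSC} produces integers $n_w\ge 1$ with $\sum_{w\in\Omega_m}n_w\le n$ and $V_{n,r}(\mu_m)\ge\sum_{w\in\Omega_m}p_wC_w^r\,V_{n_w,r}(\mu_m)$; moreover the sets $A_{n_w}$ occurring in that proof are pairwise disjoint and $|\Omega_m|\ge 2$, so $n_w\le n-1<n$ for every $w$ and the inductive hypothesis applies to each term. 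Hence
\[
V_{n,r}(\mu_m)\ \ge\ Q\sum_{w\in\Omega_m}p_wC_w^r\,n_w^{-r/l}\ \ge\ Q\,\inf\Big\{\sum_{w\in\Omega_m}p_wC_w^r\,x_w^{-r/l}:\ x_w>0,\ \sum_{w\in\Omega_m}x_w\le n\Big\}.
\]
Because $x\mapsto x^{-r/l}$ is decreasing and convex, the infimum is attained at $\sum_wx_w=n$ with $x_w=\frac{n}{d}(p_wC_w^r)^{t}$ (a routine Lagrange-multiplier computation, using $\frac{tr}{l}=\frac{r}{r+l}$), and it equals $d^{(r+l)/l}n^{-r/l}$. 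Since $d>1$, this yields $V_{n,r}(\mu_m)\ge Q\,d^{(r+l)/l}n^{-r/l}\ge Q\,n^{-r/l}$, closing the induction.

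Granting the claim, $n\,e_{n,r}^{l}(\mu_m)=n\,V_{n,r}(\mu_m)^{l/r}\ge n\,(Q\,n^{-r/l})^{l/r}=Q^{l/r}>0$ for all $n$, so $\liminf_{n\to\infty}n\,e_{n,r}^{l}(\mu_m)\ge Q^{l/r}>0$. Finally, by Proposition~\ref{prop2.12}, $\liminf_{n\to\infty}n\,e_{n,r}^{l}(\mu_m)>0$ forces $l\le\underline{D}_r(\mu_m)$; as this holds for every $l\in(0,l_{m,r})$, letting $l\uparrow l_{m,r}$ gives $l_{m,r}\le\underline{D}_r(\mu_m)$.

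I expect the main obstacle to be the inductive step: one must be sure the recursion of Proposition~\ref{SSC} can genuinely be iterated --- that is, that each $n_w$ is strictly less than $n$, which is exactly where disjointness of the $A_{n_w}$ and $|\Omega_m|\ge 2$ are used --- and one must carry out the Lagrange optimization precisely so as to extract the gain factor $d^{(r+l)/l}$. The strict inequality $d>1$, equivalently $t<t_{m,r}$, is precisely what turns the recursion from merely self-sustaining into self-improving, which is what makes the induction close.
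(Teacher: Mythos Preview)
Your proof is correct and follows essentially the same route as the paper's: a strong induction on $n$ using the recursive estimate of Proposition~\ref{SSC}, with the key optimization step handled via Lagrange multipliers where the paper instead invokes the H\"older inequality with negative exponent --- the two are equivalent and yield the identical bound $d^{(r+l)/l}n^{-r/l}$. Your explicit verification that each $n_w<n$ (via disjointness of the $A_{n_w}$ and $|\Omega_m|\ge 2$), which is needed to apply the inductive hypothesis, is in fact more careful than the paper's own argument, which uses this without comment.
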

	\begin{proof}
		Since $p_w C_{w}^{r}<1$ for each $w\in \Omega_m,$ we have $t\to \sum_{w\in\Omega_m}(p_w C_{w}^{r})^{t}$ is a strictly decreasing continuous function. It is not difficult to see that there exists a unique $l_{m,r}\in (0,\infty)$ such that
		$\sum_{w\in\Omega_m}(p_w C_{w}^{r})^{\frac{l_{m,r}}{r+l_{m,r}}}=1.$ Let $l\in (0,l_{m,r}).$ Then, $\sum_{w\in\Omega_m}(p_w C_{w}^{r})^{\frac{l}{r+l}}>1.$ Let $n_0\in \mathbb{N}$ be the same as in the previous proposition. Let $K_0=\inf\{n^{\frac{r}{l}}V_{n,r}(\mu_m): n<n_0\}$. Clearly $K_0>0.$ We claim that $K_0\leq n^{\frac{r}{l}}V_{n,r}(\mu_m)$ for all $n\in \mathbb{N}.$ We prove the claim by induction on $n\in \mathbb{N}.$ Let $n\geq n_0$ and   $K_0\leq k^{\frac{r}{l}}V_{k,r}(\mu_m)$ for all $k<n.$ By using the previous proposition, we see that there exist $n_w\in \mathbb{N}$ with $\sum_{w\in \Omega_m}n_w\leq n$. Then, we have
		\begin{align*}
			n^{\frac{r}{l}} V_{n,r}(\mu_m)&\geq n^{\frac{r}{l}}\sum_{w\in \Omega_m}p_w C_{w}^{r}V_{n_w,r}(\mu_m)\\&   \geq n^{\frac{r}{l}}\sum_{w\in \Omega_m}p_w C_{w}^{r} K_0 (n_{w})^{\frac{-r}{l}}\\&=K_0\sum_{w\in \Omega_m}p_w C_{w}^{r} \Big(\frac{n_w}{n}\Big)^{\frac{-r}{l}}.
		\end{align*}
		Now, by using the H\"older inequality for negative exponent, we obtain
		$$n^{\frac{r}{l}} V_{n,r}(\mu_m)\geq K_0 \Big(\sum_{w\in \Omega_m}(p_w C_{w}^{r})^{\frac{l}{r+l}}\Big)^{\frac{l+r}{l}}\Big( \sum_{w\in \Omega_m}\Big(\frac{n_w}{n}\Big)^{\frac{-r}{l}\frac{-l}{r}}\Big)^{\frac{-l}{r}}.$$
		Using the fact that $\sum_{w\in\Omega_m}(p_w C_{w}^{r})^{\frac{l}{r+l}}>1$
		and $\sum_{w\in \Omega_m}{n_w}\leq n,$ we get
		$$n^{\frac{r}{l}} V_{n,r}(\mu_m)\geq K_0.$$ Thus, by the principle of induction, we get  $n^{\frac{r}{l}} V_{n,r}(\mu_m)\geq K_0$ holds for all $n\in \mathbb{N}$, which is the claim.
		This implies that
		$$\liminf_{n\to \infty}n e_{n,r}^{l}(\mu_m)\geq K_{0}^{\frac{l}{r}}>0  \te{ for all } l\in (0,l_{m,r}).$$
		Therefore, by Proposition \ref{prop2.12}, we have
		$l_{m,r}\leq \underline{D}_r(\mu_m).$ This completes the proof.
	\end{proof}
	In the upcoming proposition, we determine the lower bound of the quantization dimension of the Borel probability measure supported on the limit set of the RIFS under the strong open set condition.
	\begin{prop}\label{prop3.11}
		Let $\mathcal{I}=\{X; f_{ij}, p_{ij}: 1\leq i,j\leq N\}$ be a bi-Lipschitz recurrent  IFS satisfying the strong open set condition. Let $\mu$ be the Borel probability measure supported on the limit set $E$ of the RIFS $\mathcal{I}$ . Let $r\in (0,\infty)$ and $l_r\in (0,\infty)$ be the unique number such that $\phi_r\Big(\frac{l_r}{r+l_r}\Big)=1.$ Then, we have  $l_r\leq\underline{D}_r(\mu).$
	\end{prop}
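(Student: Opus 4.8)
The plan is to obtain the bound from the previous proposition, which gives $l_{m,r}\le\underline D_r(\mu_m)$ for every $m\in\mathbb N$, by adjoining two facts: that $l_{m,r}\to l_r$ as $m\to\infty$, and that $\underline D_r(\mu)\ge\underline D_r(\mu_m)$ for each fixed $m$. Granting both, given $l<l_r$ I pick $m$ with $l<l_{m,r}$ (possible by the first fact), so $l<l_{m,r}\le\underline D_r(\mu_m)\le\underline D_r(\mu)$; letting $l\uparrow l_r$ yields $l_r\le\underline D_r(\mu)$.

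The convergence $l_{m,r}\to l_r$ is purely computational. Since $C_w=c_wc_{\sigma^{w_m}}$ and the words $\sigma^1,\dots,\sigma^N$ are fixed, $c_{\sigma^{w_m}}^r$ always lies in a compact subinterval $[a,b]\subset(0,1)$ that does not depend on $m$, so for every $t\ge0$ one has $a^{t}\sum_{w\in\Omega_m}(p_wc_w^r)^t\le\sum_{w\in\Omega_m}(p_wC_w^r)^t\le b^{t}\sum_{w\in\Omega_m}(p_wc_w^r)^t$. By the estimate obtained in the proof of Lemma \ref{bb} (with $c_w$ and $\phi_r$ in place of $s_w$ and $\Phi_r$), $\sum_{w\in\Omega_m}(p_wc_w^r)^t$ is comparable to $\phi_r(t)^m$ with multiplicative constants depending only on $t$. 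Evaluating the above at $t=t_m:=l_{m,r}/(r+l_{m,r})\in(0,1)$, where the middle sum equals $1$ by definition of $l_{m,r}$, confines $\phi_r(t_m)^m$ between two fixed positive constants; taking $m$-th roots gives $\phi_r(t_m)\to1$, and since $\phi_r$ is continuous, strictly decreasing, with $\phi_r(l_r/(r+l_r))=1$ (Lemma \ref{unique}) and $t_m\in(0,1)$, this forces $t_m\to l_r/(r+l_r)$, i.e. $l_{m,r}\to l_r$.

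For $\underline D_r(\mu)\ge\underline D_r(\mu_m)$ I would mimic the recursion--induction argument of the previous two propositions, but for $\mu$ rather than $\mu_m$. The one new difficulty is that $\mathcal I$ satisfies only the SOSC, so the level-$m$ pieces $f_w(E_{w_m})$, $w\in\Omega_m$, need not be pairwise disjoint and the Voronoi-cell splitting used there is not directly available. The fix is the device already used to build $\mathcal I^m$: the compact sets $f_w(E_{\sigma^{w_m}})\subseteq f_w(U_{w_m})$, $w\in\Omega_m$, lie in pairwise disjoint open sets, hence are separated by some $\varepsilon_m>0$, and each has $\mu$-mass at least a positive constant multiple of $p_w$ (from $\mu\ge p_w\,\mu\circ f_w^{-1}$, the level-$|\sigma^{w_m}|$ invariance, and $\min_i\mu_i(E_i)>0$). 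For $n$ large an $n$-optimal set $A_n$ for $\mu$ has $\|A_n\|_\infty<\varepsilon_m/4$ --- by Lemma \ref{le2.11}, the lemma giving $\inf_{x\in E}\mu(B(x,\epsilon))>0$, the monotonicity of $t\mapsto t^r\min_{x\in E}\mu(B(x,t))$, and $V_{n,r}(\mu)\to0$ --- so no Voronoi cell $W(a|A_n)$ meets two of the sets $f_w(E_{\sigma^{w_m}})$. Splitting $A_n$ into the pairwise disjoint sets $A_{n_w}=\{a\in A_n:W(a|A_n)\cap f_w(E_{\sigma^{w_m}})\ne\emptyset\}$ (so $\sum_wn_w\le n$) and pulling the integral back through $f_w$ and then $f_{\sigma^{w_m}}$ --- which together contribute the factor $c_w^rc_{\sigma^{w_m}}^r=C_w^r$ --- should produce a recursion $V_{n,r}(\mu)\ge\beta\sum_{w\in\Omega_m}p_wC_w^r\,V_{n_w,r}(\,\cdot\,)$ valid for large $n$ with $\beta>0$ independent of $m$. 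Feeding the previous proposition's estimate $\inf_k k^{r/l}V_{k,r}(\mu_m)>0$ for $l<l_{m,r}$ into this, together with Hölder's inequality with negative exponent as in the proof of that proposition, forces $\inf_n n^{r/l}V_{n,r}(\mu)>0$; by Proposition \ref{prop2.12} this means $l\le\underline D_r(\mu)$ for every $l<l_{m,r}$, i.e. $\underline D_r(\mu)\ge l_{m,r}$.

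I expect the main obstacle to be exactly the bookkeeping of the target measures on the right of that recursion: because of the recurrent structure one does not land back on $\mu$ itself but on its pieces $\mu|_{E_{w_m}}$, so the recursion has to be closed as a coupled system over the $N$ measures $\mu_1,\dots,\mu_N$, and it is here that the spectral radius $\phi_r$ of $L_{r,\cdot}$ genuinely enters --- one must check that the $N\times N$ matrix governing the iterated coupled recursion has the same exponential growth rate as $L_{r,\cdot}$, so that the quantity driving the induction is indeed $\sum_{w\in\Omega_m}(p_wC_w^r)^{l/(r+l)}\asymp\phi_r(l/(r+l))^m$, which blows up precisely when $l<l_r$. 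The remaining points --- uniformity of $\beta$ in $m$ (needed for the final passage $m\to\infty$), the $\mu$-negligibility of the overlaps of the sets $f_w(U_{w_m})$, and the validity of Lemma \ref{le2.11} and the positive--lower--mass lemma for the relevant measures in the RIFS setting --- are routine but should be verified.
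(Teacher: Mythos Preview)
Your computation $l_{m,r}\to l_r$ is correct and is precisely what the paper does, only packaged as a contradiction: assuming $\underline D_r(\mu)<l_r$ and having already secured $l_{m,r}\le\underline D_r(\mu)$, the paper combines the defining relation $\sum_{w\in\Omega_m}(p_wC_w^r)^{l_{m,r}/(r+l_{m,r})}=1$ with Lemma~\ref{bb} to bound a fixed constant below by $\zeta^{m(\underline D_r(\mu)/(r+\underline D_r(\mu))-l_r/(r+l_r))}$ times a constant, with $\zeta=\max_{i,j}p_{ij}c_{ij}^r<1$ and negative exponent, which diverges as $m\to\infty$.

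The real divergence between your proposal and the paper is in how one obtains $l_{m,r}\le\underline D_r(\mu)$. Your plan --- split an optimal quantizer $A_n$ for $\mu$ over the separated sets $f_w(E_{\sigma^{w_m}})$ and pull back through $g_w$ --- is sound in spirit, and you correctly locate the obstacle: the pullback lands on the piece measures $\mu_{w_m}$ rather than on $\mu$ or $\mu_m$, so one must close an $N$-component coupled recursion governed by the matrix $[(p_{ij}c_{ij}^r)^t]$, and only then does $\phi_r$ emerge. This can be carried out, but your sketch does not do it, and it is heavier than necessary. The paper bypasses the coupled system entirely via the chain
\[
l_{m,r}\ \le\ \underline D_r(\mu_m)\ \le\ \underline\dim_B(F^m),
\]
the second inequality being the standard fact that an $\epsilon$-cover of the support furnishes a quantizer, and then \emph{forces} $\underline\dim_B(F^m)\le\underline D_r(\mu)$ by exploiting the slack in the construction of $\mathcal I^m$: by irreducibility of $P$ the tails $\sigma^i$ may be elongated at will, shrinking every $S_w=s_ws_{\sigma^{w_m}}$ and hence $\underline\dim_B(F^m)$, while $\underline D_r(\mu)>0$ is guaranteed a~priori from $\dim_H(\mu)>0$ via Proposition~11.6 of \cite{GL1}. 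This single dimension comparison replaces your matrix recursion. The trade-off is that elongating $\sigma^i$ also moves the constant $\eta=\min_i c_{\sigma^i}^r$ appearing in the final contradiction step, an interaction the paper must absorb; your route avoids that, but pays with the coupled-system bookkeeping you anticipated.
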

	\begin{proof}
		Since the RIFS $\mathcal{I}$ satisfies the strong open set condition, Proposition \ref{SSC} yields that for each $m\in \mathbb{N},$ there exists an RIFS $ \mathcal{I}^m=\{X; g_{w}, p_{ij}: 1\leq i,j\leq N, w \in
		\Omega_{m} \}$
		that satisfies the strong separation condition, where $ g_w =f_w \circ f_{\sigma^i} $ for $ w=(w_1,w_2,\dots,w_{m-1},i)\in \Omega_m $ and $ \sigma^{i}$ is defined as follows: For each $ i \in  \{1,2,\dots,N\}$, there exists a finite sequence $\sigma=(i,\sigma_2,\dots,\sigma_n)\in \Omega^*$ such that $ E_{\sigma}=f_{\sigma}(E_{\sigma_n})\subseteq U_i$ and $ \sigma^*=(\sigma_n,\sigma_{n+1},\dots,i)\in \Omega^*.$ Thus, with the help of $\sigma $ and $\sigma^*$ we define $\sigma^i=(i,\sigma_2,\dots,\sigma_n,\sigma_{n+1},\dots,i).$
		Let $\mu_m$ be the probability measure supported on the limit set $F^{m}$ of the RIFS $\mathcal{I}^m.$ By using Proposition 11.6 in \cite{GL1}, we have $0< {\dim}_{\te H}(\mu)\leq \underline{D}_r(\mu).$ Thus, $0<\underline{D}_r(\mu).$ If $\underline{\dim}_{\te B}(F^{m})\leq \underline{D}_r(\mu)$ holds, then we proceed. Otherwise, we may redefine $\sigma^i$ by choosing larger length of $\sigma^*$ due to irreducibility of $P$ such that $\underline{\dim}_{\te B}(F^{m})\leq \underline{D}_r(\mu)$.  By the previous proposition,  $l_{m,r}\leq\underline{D}_r(\mu_m),$ where $l_{m,r}$ is uniquely determined by $\sum_{w\in\Omega_m}(p_w C_{w}^{r})^{\frac{l_{m,r}}{r+l_{m,r}}}=1.$ Since $\underline{D}_r(\mu_m)\leq \underline{\dim}_{\te B}(F^{m})$ and
		$\underline{\dim}_{\te B}(F^{m})\leq \underline{D}_r(\mu)$, we have
		$\underline{D}_r(\mu_m)\leq \underline{D}_r(\mu).$ Our aim is to show that $l_r\leq\underline{D}_r(\mu).$ We prove it by contradiction. Let $\underline{D}_r(\mu)<l_r.$
		Let $\eta= \min\{c_{\sigma^1}^r,c_{\sigma^2}^r,\dots,c_{\sigma^N}^r\}$ and $\zeta=\max\{p_{ji}c_{ij}^r: 1\leq i,j\leq N\}.$ Using the fact $l_{m,r}\leq \underline{D}_r(\mu)$ and $\sum_{w\in\Omega_m}(p_w C_{w}^{r})^{\frac{l_{m,r}}{r+l_{m,r}}}=1,$  we have
		\begin{align*}
			{\eta}^{\frac{-l_{m,r}}{r+l_{m,r}}}&\geq  \sum_{w\in\Omega_m}(p_w c_{w}^{r})^{\frac{l_{m,r}}{r+l_{m,r}}}\\&\geq \sum_{w\in\Omega_m}(p_w c_{w}^{r})^{\frac{\underline{D}_r(\mu)}{r+\underline{D}_r(\mu)}}\\&=
			\sum_{w\in\Omega_m}(p_w c_{w}^{r})^{\frac{\underline{D}_r(\mu)}{r+\underline{D}_r(\mu)}}(p_w c_{w}^{r})^{\frac{l_r}{r+l_r}}(p_w c_{w}^{r})^{\frac{-l_r}{r+l_r}}\\&\geq \sum_{w\in\Omega_m}(p_w c_{w}^{r})^{\frac{l_r}{r+l_r}}
			{\zeta}^{m\Big({\frac{\underline{D}_r(\mu)}{r+\underline{D}_r(\mu)}}-{\frac{l_r}{r+l_r}}\Big)}.
		\end{align*}
		By Lemma \ref{bb} and $l_{m,r}\leq \underline{D}_r(\mu)<l_r$, we have
		$${\eta}^{\frac{-l_r}{r+l_{r}}}\geq {\zeta}^{m\Big({\frac{\underline{D}_r(\mu)}{r+\underline{D}_r(\mu)}}-{\frac{l_r}{r+l_r}}\Big)} G_{r}^{\frac{-l_r}{r+l_r}},$$
		which is a contradiction for large $m\in \mathbb{N}.$ Thus, $l_r\leq\underline{D}_r(\mu)$. This completes the proof.
	\end{proof}
	\par

	\subsection*{Proof of Theorem~\ref{mainthm}} By Proposition~\ref{prop3.7} and $(1)$ of Proposition~\ref{prop2.12}, we have $\overline D_r(\mu)\leq k_r$. On the other hand, by Proposition~\ref{prop3.11}, we have
	$l_r\leq \underline D_r(\mu)$. Hence, $l_r\leq \underline D_r(\mu)\leq \overline D_r(\mu)\leq k_r$, which completes the proof of the theorem.\\

		\subsection*{ Statements and Declarations:}
	\begin{itemize}
		\item {\bf Ethical approval:} Not applicable 
		\item {\bf Competing Interests:} The authors have no relevant financial or non-financial interests
		to disclose.
		\item {\bf Data availability:} Data sharing is not applicable to this article as no datasets were generated or analysed during the current study.
		\item {\bf Funding:} The authors declare that no funds, grants, or other support were received
		during the preparation of this manuscript.
		\item 	{\bf Author Contributions:} All authors contributed equally in this manuscript.
	\end{itemize}
	
	 \section*{Acknowledgements:} We would like to thank the anonymous referees for their valuable comments and
	suggestions.
	
	\bibliographystyle{amsplain}

\end{document}